\documentclass[10pt]{amsart}
\usepackage[utf8]{inputenc}
\usepackage{color}
\usepackage{bbm}
\usepackage{amsmath,amssymb}
\usepackage{tikz}
\usetikzlibrary{arrows}
\usepackage{enumerate}
\usepackage{hyperref}

\newtheorem{thm}{Theorem}[section]
\newtheorem{cor}[thm]{Corollary}
\newtheorem{lem}[thm]{Lemma}
\newtheorem{prop}[thm]{Proposition}
\theoremstyle{definition}
\newtheorem{defin}[thm]{Definition}
\newtheorem{rem}[thm]{Remark}

\numberwithin{equation}{section}

\newcommand{\bN}{\mathbb{N}}
\newcommand{\bR}{\mathbb{R}}

\newcommand{\supp}{\operatorname{supp}}

\newcommand{\dif}{\,\mathrm{d}}

\newcommand{\cA}{\mathcal{A}}

\newcommand{\bE}{\mathbb{E}}

\newcommand{\charfun}{\ensuremath{\mathbbm 1}}

\DeclareMathOperator{\co}{conv}

\allowdisplaybreaks

\begin{document}
\title{Almost everywhere Convergence  of Spline Sequences}
\author[P. F. X. Müller]{Paul F. X. Müller}
\address{Institute of Analysis, Johannes Kepler University Linz, Austria, 4040 Linz, Altenberger Strasse 69}
\email{paul.mueller@jku.at}

\author[M. Passenbrunner]{Markus Passenbrunner}
\address{Institute of Analysis, Johannes Kepler University Linz, Austria, 4040 Linz, Altenberger Strasse 69}
\email{markus.passenbrunner@jku.at}

\keywords{Orthonormal spline projections, almost everywhere convergence, Radon-Nikod\'{y}m property}
\subjclass[2010]{65D07,46B22,42C10}
\date{\today}
\begin{abstract}
We prove the analogue of  the Martingale Convergence Theorem for polynomial spline sequences. 
Given a natural number $k $ and a sequence $(t_i)$ of knots in $[0,1]$ with multiplicity 
$\le k-1$, we let $P_n $ be the orthogonal projection onto 
the space of spline polynomials in $[0,1] $ of degree $k-1$ corresponding to the grid
$(t_i)_{i=1}^n$. Let $X$ be a Banach space with the 
Radon-Nikod\'{y}m property. Let  $(g_n)$ be a bounded sequence in the 
Bochner-Lebesgue space $L^1_X [0,1]$ satisfying 
$$ g_n = P_n ( g_{n+1} ),\qquad n \in \bN . $$
We prove the  existence of $\lim_{n\to \infty} g_n(t) $ in $X$  for almost every $t \in [0,1]. $ 
Already in the scalar valued case $X = \bR $ the  result is new.      
\end{abstract}
\maketitle
\section{Introduction}
In this paper we prove 
a convergence theorem for splines in vector valued $L^1 $-spaces. 
By way of introduction we consider the analogous convergence theorems for  martingales
with respect to a filtered probability space $ (\Omega , (\cA_n ) , \mu ) . $  We first review two  classical theorems for scalar valued martingales in 
$L^1 = L^1 (\Omega , \mu ) $. See Neveu \cite{Neveu1975}.
\begin{itemize}
\item[(M1)] Let $ g \in L^1 . $ If $g_n = \bE( g | \cA_n ) $ then $\|g_n \|_1  \le  \|g \|_1 $ and $ ( g_n ) $ 
converges almost everywhere and in $L^1 . $ 
\item[(M2)] Let   $ ( g_n ) $  be a bounded sequence in $L^1 $ such that $   g_n = \bE( g_{n+1} | \cA_n ) $. Then 
 $ ( g_n ) $ converges almost everywhere and $g = \lim g_n $ satisfies $ \|g \|_1  \le  \sup \|g_n  \|_1 $. 
\end{itemize}
Next we turn to  vector valued martingales. We  fix a Banach space $X$ and let
$L^1_X = L^1_X (\Omega , \mu ) $ denote the Bochner-Lebesgue space.  
The Radon-Nikod\'{y}m property (RNP) of the Banach space $X$ is intimately 
tied to martingales in Banach spaces. We refer to the book by  Diestel and Uhl \cite{DiestelUhl1977} for the following 
basic and well known results. 
\begin{itemize}
\item[(M3)] Let $ g \in L^1_X . $ If $g_n = \bE( g | \cA_n ) $ then $\|g_n \|_{L^1_X}  \le  
\|g \|_{L^1_X} $. The sequence  $ ( g_n ) $ 
converges almost everywhere in $X$ and in $L^1_X . $ (This holds  for any Banach space 
 $X$.) 
\item[(M4)] Let   $ ( g_n ) $  be a bounded sequence in $L^1_X $ such that 
$   g_n = \bE( g_{n+1} | \cA_n ) $. 
If the Banach space $X$ satisfies the Radon-Nikod\'{y}m property, 
then 
 $ ( g_n ) $ converges almost everywhere in $X$  and $g = \lim g_n $ satisfies 
$ \|g \|_{L^1_X}  \le  \sup \|g_n  \|_{L^1_X} $. 
Moreover the $L^1_X$-density of the
 $\mu$-absolutely 
continuous part of the vector measure 
$$ \nu (E) = \lim_{n\to \infty} \int_E g_n  d\mu, \qquad E \in \cup \cA_n $$ 
determines  $g = \lim g_n $. 
\item[(M5)] Conversely if $X$ fails to satisfy the Radon Nikod\'{y}m property, then there 
exists     a filtered probability space $ (\Omega , (\cA_n ) , \mu )  $ 
and bounded sequence in $L^1_X (\Omega , \mu ) $ satisfying $  g_n = \bE( g_{n+1} | \cA_n ) $
such that $(g_n)$ fails to converge almost everywhere in $X$. 
\end{itemize}
In the present paper we establish a new link between 
probability (almost sure convergence of martingales, the RNP) and 
approximation theory (projections onto splines in $[0,1]$).

We review the  basic setting pertaining to  spline projections. 
(See for instance \cite{Shadrin2001}, \cite{PassenbrunnerShadrin2014}, \cite{Schumaker2007}.)
So, fix an integer $ k \geq 2 $,  let $(t_i)$ a sequence of grid points in  $(0,1 )$
 where each $t_i $  occurs at
most $k-1$ times. We emphasize that in contrast to \cite{PassenbrunnerShadrin2014},
in the present paper we don't assume that the sequence of grid points
is dense in  $(0,1 )$.

Let $S_n $ denote the space of splines on the interval $[0,1] $ of order $k$ 
(degree $k-1$) corresponding to the grid
$(t_i)_{i=1}^n$. 
Let  $\lambda$ denote Lebesgue measure on the unit interval $[0,1].$
Let $P_n$ be the orthogonal projection with
respect to $L^2([0,1], \lambda)$ onto the
space of splines $S_n$.
 By Shadrin's theorem \cite{Shadrin2001},
$P_n $ admits an extension to    $L^1([0,1], \lambda)$ such that 
$$ \sup_{n \in \bN } \| P_n : L^1([0,1], \lambda) \to L^1([0,1], \lambda) \| <\infty .$$ 
Assuming that the  sequence  $(t_i)$ 
is  dense in  the unit interval $[0,1]$,
the second named author and A. Shadrin \cite{PassenbrunnerShadrin2014} proved -- in effect -- that 
for any 
$g \in L^1_X ( [0,1], \lambda)$ the sequence $ g_n = P_n g $  converges almost everywhere in $X$.
The vector valued version  of \cite{PassenbrunnerShadrin2014} holds true  without 
any condition on the underlying Banach space $X$. 
Thus the paper  \cite{PassenbrunnerShadrin2014} established the spline analogue of the 
martingale  properties (M1) and (M3) -- under the restriction that  $(t_i)$ 
is  dense in  the unit interval $[0,1]$. 

Our main theorem -- extending  \cite{PassenbrunnerShadrin2014} -- 
shows that  the  vector valued martingale convergence theorem 
has a direct counterpart 
in  the context of spline projections. Theorem \ref{thm:main} gives the spline 
analogue of the martingale  properties (M2) and (M4). The first step in the proof of 
Theorem \ref{thm:main} consists in showing that 
the restrictive density condition  on   $(t_i)$ may be lifted from the assumptions in 
 \cite{PassenbrunnerShadrin2014}. 
\begin{thm}[Spline Convergence Theorem]\label{thm:main}
	Let $X$ be a Banach space with RNP and 
	$(g_n)$ be a sequence in $L^1_X$ with the properties
	\begin{enumerate}
		\item $\sup_n \|g_n\|_{L^1_X} <\infty$, 
		\item $P_m g_n = g_m$ for all $m\leq n$.
	\end{enumerate}
	Then, $g_n$ converges $\lambda$-a.e. to some $L^1_X$ function.
\end{thm}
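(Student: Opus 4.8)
The plan is to reproduce, for spline projections, the architecture of the vector-valued martingale theorem (M4), with two substitutions: the reproducing kernels of the spaces $S_n$ play the role of the conditional expectations $\bE(\cdot\mid\cA_n)$, and the order-$k$ decay estimates for orthonormal spline functions (which depend only on $k$, never on the geometry or density of the grid) replace the tower property. The first reduction is to record the \emph{stabilisation identity}: since $P_m$ is self-adjoint and $P_m g_n=g_m$, one has
\[
\int_0^1 g_n(t)\,h(t)\dif t=\int_0^1 g_m(t)\,h(t)\dif t,\qquad h\in S_m,\ n\ge m .
\]
This is the exact spline analogue of $\int_E g_n\dif\mu=\int_E g_m\dif\mu$ for $E\in\cA_m$, and it is the only consequence of hypothesis (2) that the argument will use.

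\textbf{Step 1 (density-free convergence for fixed data).} I would first upgrade \cite{PassenbrunnerShadrin2014} by deleting the density hypothesis on $(t_i)$. Writing $S_\infty=\overline{\bigcup_n S_n}$ and $P_\infty$ for the orthogonal projection onto it, the target is: for every $f\in L^1_X$ one has $P_nf\to P_\infty f$ $\lambda$-a.e. The proof of \cite{PassenbrunnerShadrin2014} factors through a maximal inequality of weak type $(1,1)$ for $f\mapsto\sup_n\|P_nf\|_X$ together with convergence on the dense class $\bigcup_n S_n$; neither ingredient uses density, because the exponential localisation of the dual B-spline functionals is a statement about a single grid that is uniform in $k$ alone. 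Thus the same Calderón–Zygmund argument gives a.e. convergence to $P_\infty f$, which now need not equal $f$.

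\textbf{Step 2 (the limiting measure and RNP).} Next I would manufacture an $X$-valued measure encoding the sequence. In the scalar case $\sup_n\|g_n\|_1\le M$ makes $\{g_n\dif\lambda\}$ bounded in $C[0,1]^*$, so a subsequence converges weak-$\ast$ to a finite measure $\nu$; the stabilisation identity forces $\int_0^1 h\dif\nu=\int_0^1 g_n h\dif\lambda$ for every $h\in S_n$, that is
\[
g_n=P_n\nu,\qquad P_n\nu(x):=\int_0^1 K_n(x,y)\dif\nu(y),
\]
where $K_n$ is the reproducing kernel of $S_n$. For a space with RNP one assembles $\nu$ from its scalar marginals $\langle x^*,g_n\rangle$ (each handled by the scalar case) and promotes the resulting finitely additive $X$-valued set function to a genuine measure of bounded variation $\le M$ using the uniform $L^1_X$-bound; the Radon–Nikod\'ym property then supplies a Bochner density $g\in L^1_X$ for the $\lambda$-absolutely continuous part $\nu_{ac}$.

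\textbf{Step 3 (differentiate).} Finally, I would Lebesgue-decompose $\nu=g\dif\lambda+\nu_s$ and exploit the exact splitting
\[
g_n=P_n\nu=P_ng+P_n\nu_s ,
\]
which follows from the reproducing property tested against $h\in S_n$. Step 1 gives $P_ng\to P_\infty g$ a.e., while the same maximal inequality, now applied to the singular measure $\nu_s$, yields the spline analogue of $\bE(\nu_s\mid\cA_n)\to0$ a.e., so $P_n\nu_s$ also converges a.e. Hence $g_n$ converges $\lambda$-a.e. to an element of $L^1_X$ (namely $P_\infty\nu$). The genuine obstacle is concentrated in Steps 1 and 3: establishing the grid-independent maximal inequality and, above all, the vector-valued spline differentiation theorem for measures, where RNP is indispensable exactly as in the passage from (M2) to (M4)—without it the singular marginals need carry no Bochner density and, as in (M5), a.e. convergence must fail.
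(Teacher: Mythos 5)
Your outline reproduces the paper's architecture only on the set where the knots accumulate, and it conceals two genuine gaps. The first is in Step 1: you propose to transfer a.e.\ convergence from ``the dense class $\bigcup_n S_n$'' via the weak type $(1,1)$ maximal inequality, but when $(t_i)$ is not dense in $[0,1]$ the union $\bigcup_n S_n$ is \emph{not} dense in $L^1$ (on any interval free of accumulation points of the knots its elements are single polynomials of degree $<k$), so the standard approximation argument does not close. The paper has to establish a.e.\ convergence of $P_n u$ for \emph{continuous} $u$ first (Lemma \ref{lem:aecontinuous}), and this already requires splitting $[0,1]$ into the open set $U$ where the knots do not accumulate (local uniform convergence there, using the lower bound $\inf_n\lambda(I_n(t))>0$ and $L^1$-Cauchyness of $P_nu$) and its complement (a Jackson-type approximation on a window of $\ell$ knots around $t$).

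The second and more serious gap is that Steps 2--3 treat the limit object as a single Borel vector measure $\nu$ whose Lebesgue decomposition explains $\lim g_n$ everywhere. This breaks down on the set $V=\cup_j V_j$ where the knot sequence is locally finite: there $\charfun_A$ cannot be approximated by splines in the relevant topology, so $T(\charfun_A)=\lim_n\int_A g_n\dif\lambda$ need not exist, and the $\sigma$-algebra $\{A\in\mathcal B:\charfun_A\in W\}$ agrees with the Borel sets only on $V^c$ (Lemma \ref{lem:charfun}); this is why the paper defines $\nu$ only on $\mathcal B_{V^c}$. On each component $U_{j_0}$ of $U$ the limit of $g_n$ is not a Radon--Nikod\'ym density at all but the explicit series $\sum_j T(\bar N_j)\bar N_j^*$ built from the limiting B-splines and their duals (Section \ref{sec:splineConstruction}, Lemmas \ref{lem:convBspline} and \ref{lem:splineDual}, and Part 2 of the proof); this entire half of the argument is absent from your proposal. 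Relatedly, your assertion that ``the same maximal inequality, now applied to $\nu_s$'' yields $P_n\nu_s\to 0$ a.e.\ is not a proof: a weak-type bound alone gives no a.e.\ statement for singular measures without a differentiation argument. The required ingredient is the Vitali-covering Lemma \ref{31-10-17-3}, applied to the scalar marginals $x^*\circ\nu_s$ together with a separability reduction (Lemma \ref{lem:dual}); note that this lemma too is restricted to $V^c$ and uses that a.e.\ point there is approximated by knots from both sides. The weak$^*$ compactness you invoke to build $\nu$ from scalar marginals is also delicate (subsequences depend on $x^*$); the paper avoids it by defining the operator $T$ directly on $\cup_n S_n$, where the limits exist without passing to subsequences, and controlling it by the scalar measure $\mu=\text{w}^*\text{-}\lim\|g_{k_n}\|_X\dif\lambda$.
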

\noindent 
Already in the scalar case $X = \bR $ Theorem~\ref{thm:main} is a new 
result.
In the course of  its proof 
we  {\em intrinsically} describe  the pointwise limit of the sequence  $( g_n )$. 
At the end   of Section \ref{sec:proof} 
we formulate a refined version of  Theorem~\ref{thm:main} 
employing  the tools we developed 
for its proof.  This includes an  explicit expression of 
$\lim g_n $ in terms of $B$-splines. 
   
We point out that only under significant restrictions on the geometry  of the  grid points   $(t_i)$, 
is it true that the spline projections $P_n$ are Calderon-Zygmund operators (with constants independent of  $n $).
See \cite{gev-kam-08}.

Our present paper should be seen in context with the second named author's work \cite{Passenbrunner2014}, where Burkholder's martingale inequality
$$ \big\| \sum \pm ( \bE( f_{} | \cA_n ) - \bE( f_{} | \cA_{n-1} )) \big \|_{L^p (\Omega, \mu )} \le C_p 
\| f\|  _{L^p (\Omega ,\mu)} , $$
was given a counterpiece for spline projections as follows
$$ \big \| \sum \pm ( P_n(g)  -  P_{n-1}(g) )\big \|_{L^p ([0,1] ) }\le C_p 
\| g \|  _{L^p ([0,1] )},  $$ 
where $ 1 < p < \infty, $ and $C_p \sim p^2/(p-1) . $ 
The corresponding analogue for vector valued spline projections is still outstanding.
(See however \cite{KamontMuller2006} for  a special case.)

\subsection*{Organization.} The  presentation is organized as follows. 
In Section \ref{sec:prelims}, we
collect some important facts and tools used in this article. Section \ref{sec:convFunction} treats the convergence of $P_n g$ for
$L^1_X$-functions $g$. 
Section \ref{sec:splineConstruction} contains special spline constructions associated to
the point sequence $(t_i)$.
In Section \ref{sec:measure}, we give a measure theoretic lemma
that is subsequently employed and may be of independent interest in the theory of
splines.
Finally, in Section \ref{sec:proof}, we give
the proof of the Spline Convergence Theorem.

\section{Preliminaries}
\label{sec:prelims}
\subsection{Basics about vector measures}
We refer to the book \cite{DiestelUhl1977} by J. Diestel and J.J. Uhl
for basic facts on martingales and vector measures. 
Let $(\Omega,\mathcal A)$ be a measure space and $X$ a Banach space.
Every $\sigma$-additive map $\nu:\mathcal A\to X$ is called a \emph{vector
measure}.
The \emph{variation} $|\nu|$ of $\nu$ is the set function
\begin{equation*}
	|\nu|(E) = \sup_\pi \sum_{A\in\pi} \|\nu(A)\|_X,
\end{equation*}
where the supremum is taken over all partitions $\pi$ of $E$ into a finite numer
of pairwise disjoint members of $\mathcal A$.
If $\nu$ is of bounded variation, i.e., $|\nu|(\Omega)<\infty$, the variation
$|\nu|$ is $\sigma$-additive.
If $\mu:\mathcal A \to [0,\infty)$ is a measure and $\nu:\mathcal A\to X$ is
	a vector measure, $\nu$ is called \emph{$\mu$-continuous}, if
	$\lim_{\mu(E)\to 0} \nu(E)=0$ for all $E\in\mathcal A$.

	\begin{defin}
		A Banach space $X$ has the \emph{Radon-Nikod\'{y}m property
	(RNP)} if
		for every measure space $(\Omega,\mathcal A)$, for every
		positive measure $\mu$ on $(\Omega,\mathcal A)$ and for every
		$\mu$-continuous vector measure $\nu$ of bounded variation,
		there exists a function $f\in L^1_X(\Omega,\mathcal A,\mu)$ such
		that
		\begin{equation*}
			\nu(A) = \int_A f\dif\mu,\qquad A\in\mathcal A.
		\end{equation*}
	\end{defin}

\begin{thm}[Lebesgue decomposition of vector measures]\label{thm:lebesgue}
Let $(\Omega,\mathcal A)$ be a measure space, $X$ a Banach space,
$\nu : \mathcal A \to X$ a vector measure and $\mu:\mathcal A\to [0,\infty)$
	a measure. Then, there exist unique vector
measures $\nu_c,\nu_s : \mathcal A\to X$ such that
\begin{enumerate}
	\item $\nu = \nu_c + \nu_s$,
	\item $\nu_c$ is $\mu$-continuous,  
	\item $x^*\nu_s$ and $\mu$ are mutually singular for each $x^*\in
		X^*$.
\end{enumerate}
If $\nu$ is of bounded variation, $\nu_c$ and $\nu_s$ are of bounded
variation as well, $|\nu|(E) = |\nu_c|(E) + |\nu_s|(E)$ for each $E\in \mathcal A$ and
$|\nu_s|$ and $\mu$ are mutually singular.
\end{thm}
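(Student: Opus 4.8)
The plan is to reduce the whole statement to the classical scalar Lebesgue decomposition, applied once to a suitable nonnegative control measure, combined with a cut-off of $\nu$ along a single $\mu$-null set. I would first settle existence in the bounded variation case, then deal with the general case, and finally prove uniqueness by a Hahn--Banach reduction to scalars.

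\emph{Bounded variation case.} Here $|\nu|$ is a finite nonnegative measure, so the classical scalar decomposition gives $|\nu| = \alpha_c + \alpha_s$ with $\alpha_c\ll\mu$ and $\alpha_s\perp\mu$, together with a set $S\in\mathcal A$ satisfying $\mu(S)=0$, $\alpha_s(E)=|\nu|(E\cap S)$ and $\alpha_c(E)=|\nu|(E\setminus S)$. I would then simply define $\nu_c(E)=\nu(E\setminus S)$ and $\nu_s(E)=\nu(E\cap S)$; both are $\sigma$-additive since $E\mapsto E\setminus S$ and $E\mapsto E\cap S$ respect disjoint unions, and $\nu=\nu_c+\nu_s$ by finite additivity. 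A short argument matching partitions of $E$ with partitions of $E\cap S$ (resp.\ $E\setminus S$) after intersecting yields $|\nu_s|(E)=|\nu|(E\cap S)$ and $|\nu_c|(E)=|\nu|(E\setminus S)$, which gives bounded variation of both pieces and the additivity $|\nu|=|\nu_c|+|\nu_s|$. The $\mu$-continuity of $\nu_c$ follows from $\|\nu_c(E)\|_X\le|\nu_c|(E)=\alpha_c(E)\to0$ as $\mu(E)\to0$, and since $\nu_s$ and $|\nu_s|$ are concentrated on the $\mu$-null set $S$, every $x^*\nu_s$ as well as $|\nu_s|$ is mutually singular with $\mu$.

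\emph{General case.} When $|\nu|$ may be infinite it is useless, and the essential point is to produce a single $\mu$-null set carrying the singular behaviour \emph{simultaneously for all} $x^*$. I would supply this by a finite control measure $\lambda$ for $\nu$ (existence via the Bartle--Dunford--Schwartz theorem, or Rybakov's $\lambda=|x^*\nu|$), characterized by $\nu\ll\lambda$ in the $\varepsilon$--$\delta$ sense. Applying the scalar decomposition to $\lambda$ with respect to $\mu$ produces a $\mu$-null set $S$ carrying the singular part of $\lambda$, and with the same cut-off definitions of $\nu_c,\nu_s$ the verification runs as before: $\mu$-continuity of $\nu_c$ holds because $\lambda(E\setminus S)\le\lambda_c(E)\to0$ as $\mu(E)\to0$ and $\nu\ll\lambda$, while the singularity of each $x^*\nu_s$ is immediate since $\nu_s$ is concentrated on the $\mu$-null set $S$.

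\emph{Uniqueness.} Given two decompositions, I would set $\rho=\nu_c-\nu_c'=\nu_s'-\nu_s$. For each $x^*\in X^*$ the scalar measure $x^*\rho$ is finite, is $\mu$-continuous (being $x^*$ applied to the $\mu$-continuous vector measure $\nu_c-\nu_c'$), and is $\mu$-singular (the difference $x^*\nu_s'-x^*\nu_s$ is concentrated on the $\mu$-null union of the two singular sets); a finite scalar measure that is both absolutely continuous and singular with respect to $\mu$ vanishes, so $x^*\rho=0$ for all $x^*$, whence $\rho=0$ by Hahn--Banach. I expect the general (unbounded variation) existence to be the only genuine obstacle, and it is resolved precisely by invoking a control measure; once that null set is in hand, the cut-off construction and all verifications are routine.
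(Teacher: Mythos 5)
The paper states this theorem without proof, quoting it as a background fact from Diestel--Uhl, and your argument is correct and coincides with the standard proof of that result: reduce to the scalar Lebesgue decomposition of a finite control measure (of $|\nu|$ itself in the bounded-variation case, of a Bartle--Dunford--Schwartz or Rybakov control measure in general), cut $\nu$ off along the resulting $\mu$-null set $S$, and deduce uniqueness from the scalar uniqueness via Hahn--Banach. All the verifications you sketch --- the variation identities $|\nu_c|(E)=|\nu|(E\setminus S)$, $|\nu_s|(E)=|\nu|(E\cap S)$ by matching partitions, the $\varepsilon$--$\delta$ continuity of $\nu_c$, and the singularity of each $x^*\nu_s$ --- go through as stated.
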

The following theorem provides the fundamental  
link between convergence of vector valued martingales
and the RNP 
of the underlying Banach space $X$. See Diestel-Uhl \cite[Theorem V.2.9]{DiestelUhl1977}.  It is the point of reference for our present work 
on convergence of spline projections.   
\begin{thm}[Martingale convergence theorem]\label{thm:mart_conf}
Let $(\Omega,\mathcal A)$ be a  measure space and $\mu:\mathcal A\to [0,\infty)$
a measure. 
Let $(\cA_n)$ be a sequence of increasing sub-$\sigma$-algebras of  $\cA .$ 
Let $X$ be a Banach space, let $(g_n)$ be a bounded sequence in 
$L^1_X (\Omega,\mathcal A_n, \mu ), $ such that $g_n = \bE( g_{n+1} |\cA )$ 
and let 
$$\nu ( E ) = \lim_{n\to \infty} \int_E g_n d\mu, \qquad  E \in \cup \cA_n .$$ 
Let $\nu = \nu_c +\nu_s $ denote the Lebesgue decomposition of $\nu $
with respect to $ \mu .$ Then $\lim_{n\to \infty} g_n$ exists almost everywhere with respect to $\mu $ 
if and only if $\nu_c$ has a Radon-Nikod\'{y}m derivative $f\in L^1_X(\Omega, \mu ). $ 
In this case 
$$ \lim_{n\to \infty} g_n = \bE( f | \cA_\infty) , $$ 
where $   \cA_\infty$ is the $\sigma$-algebra generated by $\cup \cA_n . $ 
\end{thm}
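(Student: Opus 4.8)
The plan is to establish the stated equivalence by first turning $\nu$ into a genuine $X$-valued measure on $\cA_\infty$ and then proving the two implications separately. To set up, I would observe that since the $\cA_n$ increase and $g_m=\bE(g_n\mid\cA_m)$ for $m\le n$, the sequence $(\int_E g_n\dif\mu)_n$ is eventually constant for every $E\in\cF:=\bigcup_n\cA_n$, so $\nu(E)=\int_E g_m\dif\mu$ whenever $E\in\cA_m$, and $\nu$ is a finitely additive $X$-valued function on the algebra $\cF$. Every finite $\cF$-partition sits inside a single $\cA_m$, which gives $\sum_A\|\nu(A)\|_X\le\|g_m\|_{L^1_X}\le\sup_n\|g_n\|_{L^1_X}$; hence $\nu$ has bounded variation. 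I would then extend $\nu$ to a countably additive, bounded-variation vector measure on $\cA_\infty$ by the Carath\'eodory--Hahn extension and apply Theorem~\ref{thm:lebesgue} to obtain $\nu=\nu_c+\nu_s$ with $\nu_s$ of bounded variation and $|\nu_s|\perp\mu$.

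For sufficiency ($\nu_c$ has a density $f\in L^1_X$ $\Rightarrow$ convergence), I would set $h_n=\bE(f\mid\cA_n)$, which converges $\mu$-a.e. to $\bE(f\mid\cA_\infty)$ by property (M3). Writing $d_n=g_n-h_n$, a direct computation gives $\int_E d_n\dif\mu=\nu(E)-\nu_c(E)=\nu_s(E)$ for $E\in\cA_n$, so it remains to prove $d_n\to0$ a.e. My plan here is to pass to the scalar process $\varphi_n=\|d_n\|_X$, a nonnegative $L^1$-bounded submartingale (by Jensen for the Bochner conditional expectation), which converges a.e. by Doob's theorem to some $\varphi_\infty$. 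Since $\nu_s$ has Bochner $\cA_n$-density $d_n$, its $\cA_n$-variation on $E\in\cA_n$ is $\int_E\|d_n\|_X\dif\mu$, whence $\int_E\varphi_n\dif\mu\le|\nu_s|(E)$ on $\cF$; Fatou then yields $\int_E\varphi_\infty\dif\mu\le|\nu_s|(E)$ on $\cF$. I would upgrade this inequality to all of $\cA_\infty$ (two finite measures whose difference is $\ge0$ on the generating algebra $\cF$ stays $\ge0$ after approximating sets in total variation), and finally choose $N\in\cA_\infty$ with $\mu(N)=0$ and $|\nu_s|(\Omega\setminus N)=0$ to conclude $\int_{\Omega\setminus N}\varphi_\infty\dif\mu=0$, i.e. $\varphi_\infty=0$ a.e. Then $d_n\to0$ and $g_n=d_n+h_n\to\bE(f\mid\cA_\infty)$ a.e.

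For necessity ($g_n\to g$ a.e. $\Rightarrow$ $\nu_c$ has a density), I would first note that $g$ is strongly measurable and, by Fatou, lies in $L^1_X$. The idea is to reduce to the scalar theorem functional by functional: for fixed $x^*\in X^*$ the real martingale $\langle g_n,x^*\rangle$ is $L^1$-bounded with limit measure $x^*\nu$, whose Lebesgue decomposition is $x^*\nu_c+x^*\nu_s$ since $x^*\nu_c\ll\mu$ and $x^*\nu_s\perp\mu$ (Theorem~\ref{thm:lebesgue}). The scalar case of (M4), available because $\bR$ has RNP, identifies the a.e. limit of $\langle g_n,x^*\rangle$ as $\frac{\dif(x^*\nu_c)}{\dif\mu}$; comparing with the a.e. limit $\langle g,x^*\rangle$ gives $\langle g,x^*\rangle=\frac{\dif(x^*\nu_c)}{\dif\mu}$ a.e. Integrating over $E\in\cA_\infty$ and commuting $x^*$ past the Bochner integral, I would obtain $\langle\int_E g\dif\mu,\,x^*\rangle=(x^*\nu_c)(E)=\langle\nu_c(E),x^*\rangle$ for every $x^*$, so Hahn--Banach forces $\nu_c(E)=\int_E g\dif\mu$. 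Thus $f:=g\in L^1_X$ is the Radon--Nikod\'{y}m derivative of $\nu_c$, and as $g$ is $\cA_\infty$-measurable, $\lim_n g_n=g=\bE(f\mid\cA_\infty)$, matching the formula from the sufficiency part.

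I expect the genuine obstacle to be the norm a.e. convergence $d_n\to0$ in the sufficiency direction. The functional-by-functional argument used for necessity only delivers $\langle d_n,x^*\rangle\to0$ off an $x^*$-dependent null set, and these do not combine into norm convergence once $X^*$ is nonseparable; this is exactly why I route the argument through the scalar variation submartingale $\|d_n\|_X$ and its domination by the singular measure $|\nu_s|$, and it is at this comparison that the hypothesis that $\nu_c$ possesses a density is actually used.
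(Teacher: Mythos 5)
You should first note that the paper does not prove this statement at all: it is quoted as Theorem V.2.9 of Diestel--Uhl and used as a black box, so the only comparison available is with the standard proof there. Your architecture is in fact the standard one (subtract the martingale $\bE(f\mid\cA_n)$ generated by the density of the continuous part; show that the norm of the residual martingale is a nonnegative $L^1$-bounded submartingale whose a.e.\ limit is dominated by the singular variation and hence vanishes; for the converse, reduce to the scalar theorem functional by functional and recover $\nu_c(E)=\int_E g\dif\mu$ by Hahn--Banach). The difficulty, however, is not where you locate it (the passage from $\langle d_n,x^*\rangle\to 0$ to $\|d_n\|_X\to 0$ --- that part of your plan is sound), but in the measure-theoretic setup.

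The gap: $\nu$ is in general only \emph{finitely} additive on the algebra $\cF=\bigcup_n\cA_n$ and need not admit any countably additive extension to $\cA_\infty$, so the Carath\'eodory--Hahn step fails. Take $\Omega=[0,1)$ with Lebesgue measure, $\cA_n$ generated by the intervals $[1-2^{-j},1-2^{-j-1})$ for $j<n$ together with $[1-2^{-n},1)$, and $g_n=2^n\charfun_{[1-2^{-n},1)}$. This is an $L^1$-bounded martingale, the sets $A_j=[1-2^{-j},1)\in\cF$ decrease to $\emptyset$, yet $\nu(A_j)=1$ for every $j$; hence $\nu$ is not countably additive on $\cF$ and no countably additive extension exists. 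Consequently Theorem~\ref{thm:lebesgue} (stated for $\sigma$-additive vector measures on a $\sigma$-algebra) is not applicable as you invoke it, and the two places where you use classical singularity break down: there is in general no single $N\in\cA_\infty$ with $\mu(N)=0$ and $|\nu_s|(\Omega\setminus N)=0$, and the inequality $\int_E\varphi_\infty\dif\mu\le|\nu_s|(E)$ cannot be ``upgraded to $\cA_\infty$'' because $|\nu_s|$ does not live there. The correct framework is the Lebesgue/Yosida--Hewitt decomposition for finitely additive set functions of bounded variation on the algebra $\cF$, in which singularity means: for every $\varepsilon>0$ there is $E_\varepsilon\in\cF$ with $\mu(E_\varepsilon)<\varepsilon$ and $|\nu_s|(\Omega\setminus E_\varepsilon)<\varepsilon$. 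With that replacement your submartingale argument does close: $\int_{\Omega\setminus E_\varepsilon}\varphi_\infty\dif\mu\le|\nu_s|(\Omega\setminus E_\varepsilon)<\varepsilon$, while $\int_{E_\varepsilon}\varphi_\infty\dif\mu\to0$ by absolute continuity of the integral of the single $L^1$ function $\varphi_\infty$, whence $\varphi_\infty=0$ a.e. The same repair is needed in the converse direction, where you assert that $x^*\nu_c+x^*\nu_s$ is \emph{the} Lebesgue decomposition of $x^*\nu$; note also that $\nu_c$ itself, being $\mu$-continuous of bounded variation, \emph{is} countably additive on $\cF$ and does extend to $\cA_\infty$, which is all that is actually needed to make sense of its Radon--Nikod\'ym derivative.
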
   
Let $ X $ be a Banach space, let  $ v \in   L^1(\Omega,\mathcal A,m)$ and $ x \in X . $
We recall that  $ v\otimes x : \Omega \to X $ is defined by  $v\otimes x (\omega ) = v(\omega) x$ 
and that 
$$ L^1(\Omega,\mathcal A,m) \otimes X =  {\rm span} \{ v_i  \otimes x_i : v_i \in L^1 (\Omega,\mathcal A,m) , x_i \in X \} . $$

The following  lemmata are taken from 
\cite{Pisier2016}.
\begin{lem} 
For any Banach space  $X$, the algebraic tensor product  
$ L^1(\Omega,\mathcal A,m) \otimes X   $ is a dense subspace of the Bochner-Lebesgue space  
$L^1_X (\Omega,\mathcal A,m) . $
\end{lem}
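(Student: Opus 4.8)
The plan is to exhibit a dense family inside $L^1_X(\Omega,\mathcal A,m)$ that already lies in the algebraic tensor product. The natural candidates are the $X$-valued simple functions of the form $\sum_i \charfun_{A_i}\otimes x_i$ with $x_i\in X$ and $A_i\in\mathcal A$ of finite measure, since each such function coincides with $\sum_i x_i\charfun_{A_i}$ and $\charfun_{A_i}\in L^1(\Omega,\mathcal A,m)$ precisely when $m(A_i)<\infty$. Thus it suffices to approximate an arbitrary $f\in L^1_X$ in the $L^1_X$-norm by \emph{integrable} simple functions.

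First I would invoke the strong (Bochner) measurability of $f$: by the definition of the Bochner-Lebesgue space, $f$ is the $m$-almost everywhere limit of a sequence of simple functions $s_n$. These approximants need not be dominated by $f$, so the next step is a truncation. Replacing $s_n$ by $s_n\cdot\charfun_{\{\|s_n\|_X\le 2\|f\|_X\}}$ produces simple functions that still converge to $f$ almost everywhere and now satisfy $\|s_n(\omega)\|_X\le 2\|f(\omega)\|_X$ for a.e.\ $\omega$. Consequently $\|s_n-f\|_X\le 3\|f\|_X\in L^1(\Omega,m)$, and the Dominated Convergence Theorem yields $\int_\Omega\|s_n-f\|_X\dif m\to 0$, that is, $s_n\to f$ in $L^1_X$.

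It remains to check that each $s_n$ genuinely lies in $L^1(\Omega,\mathcal A,m)\otimes X$, which is the one point requiring care. Writing $s_n=\sum_i x_i\charfun_{A_i}$ with the $A_i$ pairwise disjoint and all $x_i\ne 0$, the pointwise domination $\|s_n\|_X\le 2\|f\|_X$ gives $\int_\Omega\|s_n\|_X\dif m\le 2\|f\|_{L^1_X}<\infty$, which forces $m(A_i)<\infty$ for every $i$. Hence $\charfun_{A_i}\in L^1(\Omega,\mathcal A,m)$ and $s_n=\sum_i\charfun_{A_i}\otimes x_i$ indeed belongs to the algebraic tensor product. I expect the main (and essentially only) obstacle to be precisely this integrability of the approximants: one must guarantee finite-measure supports so that they lie in $L^1\otimes X$ rather than merely in the larger space of all $X$-valued simple functions, and the truncation above is exactly what secures this. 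Since $f\in L^1_X$ was arbitrary, density follows.
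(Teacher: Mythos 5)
Your argument is correct: the truncation $s_n\charfun_{\{\|s_n\|_X\le 2\|f\|_X\}}$ preserves a.e.\ convergence (including at points where $f=0$), supplies the dominating function $3\|f\|_X\in L^1$, and the resulting integrability forces $m(A_i)<\infty$ so that each approximant genuinely lies in $L^1(\Omega,\mathcal A,m)\otimes X$. The paper gives no proof of this lemma --- it simply cites Pisier's book --- and your argument is the standard one that would appear there, so there is nothing substantive to contrast.
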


\begin{lem}\label{lem:extension}
	Given a bounded operator $T : L^1(\Omega,\mathcal A,m) \to
        L^1(\Omega',\mathcal A',m')$ there exists a unique bounded linear map
	$\widetilde{T}:L^1_X(\Omega,\mathcal A,m)\to L^1_X(\Omega',\mathcal
	A',m')$ such that 
	\begin{equation*}
		\widetilde{T}(\varphi\otimes x) = T(\varphi)x,\qquad \varphi\in
		L^1(\Omega,\mathcal A,m),x\in X.
	\end{equation*}
	Moreover, $\|\widetilde{T}\| = \|T\|$.
\end{lem}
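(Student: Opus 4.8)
The plan is to construct $\widetilde T$ first on the algebraic tensor product $L^1(\Omega,\mathcal A,m)\otimes X$, prove the norm bound there, and then extend by density. Since the map $(\varphi,x)\mapsto T(\varphi)\otimes x$, where $(T(\varphi)\otimes x)(\omega')=T(\varphi)(\omega')\,x$, is bilinear from $L^1(\Omega,\mathcal A,m)\times X$ into $L^1_X(\Omega',\mathcal A',m')$, the universal property of the tensor product furnishes a well-defined linear map $\widetilde T$ on $L^1\otimes X$ with $\widetilde T(\varphi\otimes x)=T(\varphi)x$; this disposes of representation-independence automatically. The preceding lemma gives density of $L^1\otimes X$ in $L^1_X$, so once I establish $\|\widetilde T f\|_{L^1_X}\le\|T\|\,\|f\|_{L^1_X}$ for $f\in L^1\otimes X$, the map extends uniquely by continuity to all of $L^1_X$ with $\|\widetilde T\|\le\|T\|$, and uniqueness of any operator satisfying the stated formula is immediate, as two such operators agree on the dense subspace $L^1\otimes X$.

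The heart of the matter is the norm inequality on $L^1\otimes X$. I would fix $f=\sum_{i=1}^n\varphi_i\otimes x_i$, so that $\widetilde T f=\sum_i T(\varphi_i)\,x_i$ takes values in the finite-dimensional space $F=\lin\{x_1,\dots,x_n\}$, and compute $\|\widetilde Tf\|_{L^1_X}=\int_{\Omega'}\|\widetilde Tf\|_X\dif m'$ by duality, using that for a strongly measurable integrable $g$ one has $\int_{\Omega'}\|g\|_X\dif m'=\sup_h\bigl|\int_{\Omega'}\langle h,g\rangle\dif m'\bigr|$, the supremum running over simple $X^*$-valued $h=\sum_{j=1}^N\charfun_{B_j}x_j^*$ with disjoint $B_j$ and $\|x_j^*\|\le1$. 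For such an $h$, disjointness and linearity of $T$ give $\int_{\Omega'}\langle h,\widetilde Tf\rangle\dif m'=\sum_j\int_{B_j}T\bigl(\sum_i\langle x_j^*,x_i\rangle\varphi_i\bigr)\dif m'$; passing to the Banach-space adjoint $T^*\colon L^\infty(\Omega')\to L^\infty(\Omega)$, for which $\|T^*\|=\|T\|$, and recombining the terms, this rewrites as $\int_\Omega\langle w,f\rangle\dif m$ with $w(\omega)=\sum_j (T^*\charfun_{B_j})(\omega)\,x_j^*\in X^*$. Consequently $\bigl|\int_{\Omega'}\langle h,\widetilde Tf\rangle\dif m'\bigr|\le\int_\Omega\|w\|_{X^*}\|f\|_X\dif m\le\|w\|_{L^\infty_{X^*}}\,\|f\|_{L^1_X}$, and everything reduces to bounding $\|w\|_{L^\infty_{X^*}}$ by $\|T\|$.

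This last estimate is the main obstacle. For a fixed $x$ in the unit ball of $X$, disjointness of the $B_j$ yields $\bigl\|\sum_j\langle x_j^*,x\rangle\charfun_{B_j}\bigr\|_{L^\infty(\Omega')}\le1$, and since $\langle w(\cdot),x\rangle=T^*\bigl(\sum_j\langle x_j^*,x\rangle\charfun_{B_j}\bigr)$ by linearity of $T^*$, this function has $L^\infty(\Omega)$-norm at most $\|T^*\|=\|T\|$; hence $|\langle w(\omega),x\rangle|\le\|T\|$ for $\omega$ outside an $m$-null set that a priori depends on $x$. The delicate point is to make this simultaneous in $x$ without assuming $X$ separable: because $w$ takes values in the finite-dimensional subspace $\lin\{x_1^*,\dots,x_N^*\}$, whose unit sphere is compact, there is a fixed \emph{countable} set $D\subseteq B_X$ that norms this subspace, so $\|w(\omega)\|_{X^*}=\sup_{x\in D}|\langle w(\omega),x\rangle|$. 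Taking the union over $x\in D$ of the corresponding null sets then gives $\|w(\omega)\|_{X^*}\le\|T\|$ for a.e.\ $\omega$, i.e.\ $\|w\|_{L^\infty_{X^*}}\le\|T\|$. Combining the estimates and taking the supremum over $h$ yields $\|\widetilde Tf\|_{L^1_X}\le\|T\|\,\|f\|_{L^1_X}$. For the reverse inequality I would fix $x_0\in X$ with $\|x_0\|=1$ and note that $\|\widetilde T(\varphi\otimes x_0)\|_{L^1_X}=\|T\varphi\|_{L^1}$ while $\|\varphi\otimes x_0\|_{L^1_X}=\|\varphi\|_{L^1}$, so $\|\widetilde T\|\ge\|T\|$, whence $\|\widetilde T\|=\|T\|$.
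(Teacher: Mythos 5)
Your proposal is correct, and it is worth noting that the paper itself offers no proof of this lemma at all --- it simply cites Pisier's book --- so the comparison is with the standard argument found there. That standard route is considerably shorter than your duality computation: one works not with general elements of $L^1\otimes X$ but with the simple $X$-valued functions $f=\sum_i \charfun_{A_i}\otimes x_i$ with \emph{disjoint} sets $A_i$, which are likewise dense in $L^1_X$; for such $f$ one has the exact identity $\|f\|_{L^1_X}=\sum_i m(A_i)\,\|x_i\|_X=\sum_i\|\charfun_{A_i}\|_{L^1}\|x_i\|_X$, so the triangle inequality gives $\|\widetilde Tf\|_{L^1_X}\le\sum_i\|T\charfun_{A_i}\|_{L^1}\|x_i\|_X\le\|T\|\,\|f\|_{L^1_X}$ in one line, and the extension by density proceeds exactly as in your final step. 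Your argument instead estimates $\widetilde T$ on an arbitrary tensor $\sum_i\varphi_i\otimes x_i$ by dualizing against disjointly supported simple $X^*$-valued functions and pushing the characteristic functions through the adjoint $T^*$; this is essentially a proof that the $L^1_X$-norm dominates the image of the projective tensor norm under $T\otimes\mathrm{id}$, and your handling of the measurability of $\omega\mapsto\|w(\omega)\|_{X^*}$ via the finite-dimensional range of $w$ and a countable norming set in $B_X$ is exactly the right way to avoid a spurious separability assumption on $X$. The only implicit hypothesis you should flag is the identification $(L^1(\Omega'))^*=L^\infty(\Omega')$ used to define $T^*\charfun_{B_j}$ as a bounded function on $\Omega$, which requires the measures to be $\sigma$-finite (or localizable); this is harmless for the paper, where everything lives on $[0,1]$ with Lebesgue measure, and the disjoint-support argument avoids the adjoint altogether. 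In short: your proof buys a direct bound on arbitrary representatives of the algebraic tensor product at the cost of a duality detour, while the standard proof buys brevity by exploiting the isometric identity that the $L^1_X$-norm of a disjointly supported sum splits exactly.
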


\begin{lem}\label{lem:dual}
	Let $X_0$ be a separable closed subspace of a Banach space $X$. Then,
	there exists a sequence $(x_n^*)$ in the unit ball of the dual $X^*$ of
	$X$ such that
	\begin{equation*}
		\| x \| = \sup_n |x_n^*(x)|,\qquad x\in X_0.
	\end{equation*}
\end{lem}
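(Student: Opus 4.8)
The plan is to combine the separability of $X_0$ with the Hahn--Banach theorem to manufacture a countable norming family. First I would fix a countable dense subset $(x_k)$ of $X_0$. For each index $k$ with $x_k\neq 0$, the Hahn--Banach theorem produces a functional on $X_0$ of norm $1$ attaining $\|x_k\|$ at $x_k$; a further Hahn--Banach extension carries it to some $x_k^*\in X^*$ of the same norm, so that $\|x_k^*\|=1$ and $x_k^*(x_k)=\|x_k\|$. In this way $(x_k^*)$ lies in the unit ball of $X^*$. (For the indices where $x_k=0$, if any, one may take an arbitrary norm-one functional or simply discard them.) It is crucial here that the functionals are produced as elements of $X^*$ and not merely of $X_0^*$; this is exactly what the extension step secures.

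Next I would verify the two inequalities comprising the desired identity. The easy direction is the upper bound: since $\|x_k^*\|\leq 1$, we have $|x_k^*(x)|\leq\|x\|$ for every $x\in X_0$ and every $k$, whence $\sup_k|x_k^*(x)|\leq\|x\|$.

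For the reverse inequality I would exploit density. Fix $x\in X_0$ and $\varepsilon>0$, and choose $k$ with $\|x-x_k\|<\varepsilon$. Then, using the reverse triangle inequality for $x_k^*$ together with $\|x_k^*\|\leq 1$ and $x_k^*(x_k)=\|x_k\|$,
$$ |x_k^*(x)| \geq |x_k^*(x_k)| - |x_k^*(x-x_k)| \geq \|x_k\| - \|x-x_k\| \geq \|x\| - 2\|x-x_k\| > \|x\| - 2\varepsilon. $$
Since $\varepsilon>0$ is arbitrary, $\sup_k|x_k^*(x)|\geq\|x\|$, and combining this with the upper bound yields $\|x\|=\sup_k|x_k^*(x)|$ for all $x\in X_0$.

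I do not expect a genuine obstacle in this argument; the only points requiring care are the two just highlighted. First, the norming functionals must be honest elements of $X^*$, which the Hahn--Banach extension provides at the cost of nothing but invoking the theorem. Second, the index $k$ witnessing the lower bound is drawn from the very same dense sequence that supplied the functional $x_k^*$, so $x_k^*$ already recovers almost all of the norm of $x$; this is what makes the single countable family simultaneously norming for every $x\in X_0$.
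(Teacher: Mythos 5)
Your argument is correct and is the standard proof of this fact: a countable dense subset of $X_0$, Hahn--Banach norming functionals extended to $X^*$, and the density/triangle-inequality estimate for the lower bound. The paper itself offers no proof --- it simply cites the lemma from Pisier's book --- so there is nothing to compare against; your write-up fills that gap with the canonical argument, and the only (harmless) loose end is the degenerate case $X_0=\{0\}$, where one should keep at least one norm-one functional rather than discard everything.
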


\subsection{Tools from Real Analysis}
We use  the book by E. Stein \cite{Stein1970} 
as our basic reference to Vitali's covering Lemma and  
weak-type estimates for the 
Hardy-Littlewood maximal function.

\begin{lem}[Vitali covering lemma]\label{lem:vitali}
	Let $\{C_x : x\in \Lambda\}$ be an arbitrary collection of balls in
	$\mathbb R ^d$ such that $\sup\{\operatorname{diam}(C_x) : x\in \Lambda\}<\infty$.
	Then, there exists a countable subcollection $\{C_x : x \in J\}$,
	$J\subset \Lambda$ of balls from the original collection that are
	disjoint and satisfy
	\begin{equation*}
		\bigcup_{x\in \Lambda} C_x \subset \bigcup_{x\in J} 5C_x.
	\end{equation*}
\end{lem}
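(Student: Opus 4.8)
The plan is to prove the lemma by the classical greedy selection argument, organized according to the sizes of the balls. Write $R=\sup_{x\in\Lambda}\operatorname{diam}(C_x)<\infty$ and, for each integer $j\ge1$, group the balls by diameter by setting
\begin{equation*}
	\Lambda_j=\Bigl\{x\in\Lambda:\frac{R}{2^{j}}<\operatorname{diam}(C_x)\le\frac{R}{2^{j-1}}\Bigr\},
\end{equation*}
so that every ball of positive diameter belongs to exactly one $\Lambda_j$. The purpose of this dyadic grouping is that within $\Lambda_j$, and relative to every earlier generation $\Lambda_i$ with $i\le j$, the radii are comparable up to a factor of two; this comparability is precisely what will force the dilation constant $5$.

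First I would select the subfamily inductively. Let $J_1\subset\Lambda_1$ be a maximal subcollection of pairwise disjoint balls; such a maximal family exists by Zorn's lemma. Having chosen $J_1,\dots,J_{j-1}$, let $J_j\subset\Lambda_j$ be a maximal subcollection of balls that are pairwise disjoint and also disjoint from every ball already selected in $J_1\cup\dots\cup J_{j-1}$. Put $J=\bigcup_{j\ge1}J_j$. By construction the balls $\{C_x:x\in J\}$ are pairwise disjoint. Since each such ball has positive Lebesgue measure and $\mathbb R^d$ is $\sigma$-finite, any disjoint family of them is necessarily countable, which yields the countability of $J$.

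It remains to verify the covering inclusion, and this is where the geometry enters. Fix an arbitrary $x\in\Lambda$ and let $j$ be the index with $x\in\Lambda_j$. If $C_x$ was itself selected, then $C_x\subset 5C_x$ trivially. Otherwise, by the maximality used to define $J_j$, the ball $C_x$ must intersect some selected ball $C_y$ with $y\in J_1\cup\dots\cup J_j$. Each such $y$ lies in some $\Lambda_i$ with $i\le j$, so $\operatorname{diam}(C_y)>R/2^{i}\ge R/2^{j}\ge\tfrac12\operatorname{diam}(C_x)$, i.e.\ the radius of $C_y$ is at least half the radius of $C_x$. A short triangle-inequality computation then shows that two intersecting balls whose radii obey this comparison satisfy $C_x\subset 5C_y$: writing $r,s$ for the radii of $C_x,C_y$ and $p,q$ for their centers, the intersection gives $|p-q|\le r+s$, so every $z\in C_x$ satisfies $|z-q|\le|z-p|+|p-q|\le 2r+s\le 5s$ because $r\le 2s$. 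Hence $C_x\subset 5C_y\subset\bigcup_{y\in J}5C_y$, which completes the argument.

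The only genuine subtlety, the \emph{hard part}, lies in handling a possibly uncountable index set $\Lambda$: a naive sequential greedy choice is unavailable, so the maximal disjoint subfamilies must be produced by Zorn's lemma, and their countability has to be recovered afterwards from the measure-theoretic fact that a disjoint family of positive-measure balls in $\mathbb R^d$ is countable. Everything else reduces to the elementary dyadic bookkeeping and the triangle-inequality estimate recorded above.
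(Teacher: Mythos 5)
Your argument is the standard greedy/dyadic proof of the Vitali covering lemma and it is correct: the maximal disjoint subfamilies via Zorn's lemma, countability from disjointness of positive-measure balls, and the triangle-inequality estimate $|z-q|\le 2r+s\le 5s$ using $r\le 2s$ are all in order. The paper gives no proof of this lemma but simply cites Stein's book, where essentially this same classical argument appears, so there is nothing further to compare.
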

 Vitali's covering Lemma implies 
weak type estimates for the Hardy-Littlewood maximal function. 
\begin{thm}\label{thm:weaktype}
	Let $f\in L^1_X$ and $\mathcal Mf(t) := \sup_{I\ni t}
	\frac{1}{\lambda(I)}\int_I \|f(s)\|_X\dif s$ the Hardy-Littlewood
	maximal function.
	Then $\mathcal M$ satisfies the weak type estimate
	\begin{equation*}
		\lambda( \{ \mathcal Mf > u \} ) \leq 
		\frac{C\|f\|_{L^1_X}}{u},\qquad u>0,
	\end{equation*}
	where $C>0$ is an absolute constant.
\end{thm}

\subsection{Spline spaces}
Denote by
$|\Delta_n|$ the maximal mesh width of the grid $\Delta_n=(t_i)_{i=1}^n$
augmented with $k$ times the boundary points $\{0,1\}$.
Recall that $P_n$ is the orthogonal projection operator onto the space $S_n$ of
splines corresponding to the grid $\Delta_n$, which is a conditional expectation
operator for $k=1$.

For the following, we introduce the notation $A(t)\lesssim B(t)$ to indicate the
existence of a constant $c>0$ that only depends on $k$ such that $A(t)\leq
cB(t)$, where $t$ denotes all explicit or implicit dependences that the
expressions $A$ and $B$ might have.
 As is shown by A. Shadrin,
 the sequence  $(P_n) $ satisfies     $L^1$ estimates  as follows:
\begin{thm}[\cite{Shadrin2001}]\label{thm:shadrin}
The orthogonal projection $P_n$  admits a bounded extension to  $L^1$ 
such that  
$$\sup_n \| P_n : L^1 \to L^1\| \lesssim 1 .  $$ 
\end{thm}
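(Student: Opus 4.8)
The plan is to dualize and then to concentrate the entire estimate into a single, mesh-independent bound on the inverse of the B-spline Gram matrix. Since $P_n$ is self-adjoint with respect to the $L^2([0,1],\lambda)$ inner product, the $L^1$ bound we seek is equivalent to the dual bound $\sup_n\|P_n:L^\infty\to L^\infty\|\lesssim 1$, so I would work on the $L^\infty$ side. Writing the projection in the B-spline basis $(N_i)$ of $S_n$, normalized so that $\sum_i N_i\equiv 1$, every $f$ satisfies $P_n f=\sum_i c_i N_i$ with coefficient vector $c=M^{-1}b$, where $M=(\langle N_i,N_j\rangle)_{i,j}$ is the Gram matrix and $b_i=\langle f,N_i\rangle$. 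Hence $P_n$ is an integral operator with kernel
$$K(x,y)=\sum_{i,j}(M^{-1})_{ij}\,N_i(x)\,N_j(y),$$
and its $L^\infty\to L^\infty$ norm equals $\sup_x\int_0^1|K(x,y)|\dif y$.

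First I would invoke the elementary structural features of B-splines: each $N_i$ is nonnegative, supported on at most $k$ consecutive grid intervals, and the family forms a partition of unity. These facts localize both sums defining $K$, so that for fixed $x$ only the $\le k$ indices $i$ with $x\in\supp N_i$ contribute, while the $y$-integration factors through $\int N_j\dif y$, a quantity comparable to $\|N_j\|_2^2$. Estimating $\int|K(x,y)|\dif y$ thereby reduces to controlling the weighted row sums $\sum_j|(M^{-1})_{ij}|\,\|N_j\|_2^2$ uniformly in $i$ and in the grid. Because $M$ is banded of bandwidth $<k$, symmetric, positive definite, and — after the standard checkerboard sign change — totally positive, one expects its inverse to decay exponentially off the diagonal,
$$\bigl|(M^{-1})_{ij}\bigr|\,\|N_i\|_2\,\|N_j\|_2\lesssim \gamma^{|i-j|},\qquad 0<\gamma<1,$$
with both $\gamma$ and the implied constant depending only on the order $k$. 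Granting such a bound, the weighted row sums are dominated by a convergent geometric series, and the uniform estimate on $\sup_x\int|K|\dif y$, hence on $\|P_n:L^1\to L^1\|$, follows by soft manipulation.

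The whole difficulty is concentrated in producing that exponential decay with constants independent of the knot configuration: this is precisely the de~Boor conjecture resolved by Shadrin. The obstacle is that $M$ can be made arbitrarily ill-conditioned by taking a highly nonuniform mesh, so the generic bound for inverses of banded matrices — whose decay rate degrades with the condition number — is useless, and one must instead exploit the rigid sign and total-positivity structure peculiar to spline Gram matrices. The route I would take is Shadrin's: normalize so that the target estimate is scale invariant; pass to the associated bi-infinite and semi-infinite knot problems, where a limiting and compactness argument becomes available; and set up an induction on the matrix size in which the propagated quantity is the $\ell^\infty\to\ell^\infty$ norm of the suitably normalized inverse. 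The inductive step rests on a \emph{localization principle} — truncating or perturbing the mesh far from a fixed index alters the relevant inverse entries only by a geometrically small amount — which is what ultimately decouples the decay rate from the global condition number. This localization, combined with the total positivity of the sign-adjusted Gram matrix, is the genuinely hard core of the argument; everything preceding it is routine reduction.
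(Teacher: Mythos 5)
The paper does not prove this statement at all: it is imported verbatim from Shadrin's resolution of de Boor's conjecture, cited as \cite{Shadrin2001}, and used as a black box. So there is no internal proof to compare against. Your reduction — dualize to $L^\infty$, write $P_n$ as the kernel $K(x,y)=\sum_{i,j}(M^{-1})_{ij}N_i(x)N_j(y)$, localize using nonnegativity, compact supports and the partition of unity, and push everything onto a mesh-independent off-diagonal decay estimate for $M^{-1}$ — is the standard route and is consistent with how the paper actually consumes this material later (its Theorem~\ref{thm:boundgram} records exactly such a decay estimate and derives the pointwise bound on $N_i^{(n)*}$ from it). Since you explicitly concede that the exponential decay with knot-independent constants \emph{is} de Boor's conjecture and defer it to Shadrin, your proposal is a roadmap rather than a proof; on that score it is no less complete than the paper, which also defers.

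There is, however, one concrete slip in the part you call routine. The decay estimate you posit, $|(M^{-1})_{ij}|\,\|N_i\|_2\|N_j\|_2\lesssim\gamma^{|i-j|}$, is too weak to close the row-sum argument on an arbitrary mesh. Since $\|N_j\|_2^2\sim\lambda(\supp N_j)$ and $\int N_j\,\dif y\sim\lambda(\supp N_j)$, your bound gives
\begin{equation*}
\sum_j |(M^{-1})_{ij}|\int N_j\,\dif y \;\lesssim\; \sum_j \gamma^{|i-j|}\sqrt{\frac{\lambda(\supp N_j)}{\lambda(\supp N_i)}},
\end{equation*}
and the ratio $\lambda(\supp N_j)/\lambda(\supp N_i)$ is not controlled by $|i-j|$ on highly nonuniform meshes (a tiny support adjacent to a long one makes the $j$-th term blow up), so the series is not dominated by a convergent geometric series with an absolute constant. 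The correct mesh-independent normalization is the one in Theorem~\ref{thm:boundgram}: $|(M^{-1})_{ij}|\lesssim q^{|i-j|}/h_{ij}^{(n)}$, where $h_{ij}^{(n)}$ is the length of the convex hull of $\supp N_i^{(n)}\cup\supp N_j^{(n)}$. Since $h_{ij}^{(n)}\geq\lambda(\supp N_j^{(n)})$, this yields $\sum_j |(M^{-1})_{ij}|\int N_j\,\dif y\lesssim\sum_j q^{|i-j|}\lesssim 1$ uniformly in $i$ and in the grid, and your kernel argument then goes through. So the fix is to replace the symmetric $\|N_i\|_2\|N_j\|_2$ normalization by the $h_{ij}$ normalization; with that change your outline matches the form of the result the paper actually relies on.
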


By Lemma
\ref{lem:extension}, the operator $P_n$ can be extended to the
vector valued $L^1$ space $L^1_X$ with the same norm
so that for all $\varphi\in L^1$ and $x\in X$, we have
$P_n(\varphi\otimes x) = (P_n\varphi)x$. We also have the identity
\begin{equation}\label{eq:projidentity}
	\int_0^1 P_n g(t)\cdot f(t)\dif\lambda(t) = \int_0^1 g(t) \cdot P_nf(t)
	\dif\lambda(t),\qquad g\in L^1_X, f\in
L^\infty,
\end{equation}
which is just the extension of the fact
that $P_n$ is selfadjoint on $L^2$.

Fix  $f\in C[0,1]$. Consider  the $k$th forward differences of $f$ given by
$$ D_h^kf(t) = \sum_{j=0}^k(-1)^{k-j}\binom{k}{j}f(t+jh) .$$
The $k$th modulus of smoothness of $f$ in $L^\infty$ is defined as 
$$  \omega_k(f,\delta) =\sup_{0\le h\le \delta}\sup_{ 0\le t\le 1-kh} |D_h^kf(t)| ,$$
where $0\le \delta \le 1/k.$ We have  $\lim_{\delta \to 0}\omega_k(f,\delta) =0$ 
for any  $f\in C[0,1]$. Any continuous function  can be approximated by spline functions satisfying 
the following quantitative error estimate. 
\begin{thm}[{\cite[Theorem 6.27]{Schumaker2007}}]\label{thm:jackson}
	Let $f\in C[0,1]$. Then, 
	\begin{equation*}
		d(f, S_n)_\infty \lesssim \omega_k(f,|\Delta_n|),
	\end{equation*}
	where
	$d(f,S_n)_\infty$ is the distance between $f$ and $S_n$ in the
	$\sup$-norm.
	Therefore, if $|\Delta_n|\to 0$, we have $d(f,S_n)_\infty\to 0$.
\end{thm}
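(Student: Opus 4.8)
The plan is to prove the Jackson-type estimate by constructing an explicit near-best spline approximant via a quasi-interpolation operator, and then to reduce the global error bound to the local error of polynomial approximation, which Whitney's theorem controls by the modulus of smoothness. The entire argument must be carried out with constants depending only on $k$, and it is precisely this uniformity — over grids $\Delta_n$ that need not be quasi-uniform — which is the heart of the matter.

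First I would fix the B-spline basis $(N_i)$ of $S_n$ associated with the knot sequence $\Delta_n$ augmented by $k$-fold boundary knots at $0$ and $1$. These satisfy $N_i \ge 0$, $\sum_i N_i \equiv 1$ (partition of unity), and each $N_i$ is supported on an interval $J_i$ spanned by $k+1$ consecutive knots. To each $N_i$ I would attach a local linear functional $\lambda_i$ (the de Boor--Fix dual functional, built from derivatives of the dual polynomial at an interior point of $J_i$) with $\lambda_i(N_j)=\delta_{ij}$, so that $Q_n f := \sum_i \lambda_i(f)\, N_i$ defines a linear projection onto $S_n$. The two decisive properties of this operator are polynomial reproduction, $Q_n p = p$ for every $p\in\Pi_{k-1}$, and the mesh-independent stability bound $|\lambda_i(f)| \le C_k\, \|f\|_{\infty,J_i}$ with $C_k$ depending only on $k$.

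Next I would localize the error. Fix a mesh interval $I=[t_\ell,t_{\ell+1}]$. For $t\in I$ only the at most $k$ B-splines whose support meets $I$ contribute to $Q_n f(t)$, and the union $\tilde I$ of their supports is an interval with $|\tilde I|\le C_k\,|\Delta_n|$. Using $Q_n p = p$ and $\sum_i N_i \equiv 1$, for any $p\in\Pi_{k-1}$ I would write
\[
f(t) - Q_n f(t) = \bigl(f(t)-p(t)\bigr) - \sum_i \lambda_i(f-p)\, N_i(t),
\]
and then, invoking $0\le N_i \le 1$, $\sum_i N_i = 1$, and the stability bound, estimate
\[
|f(t)-Q_n f(t)| \le (1+C_k)\, \|f-p\|_{\infty,\tilde I}.
\]
Taking the infimum over $p\in\Pi_{k-1}$ yields $\|f-Q_n f\|_{\infty,I} \le (1+C_k)\, d(f,\Pi_{k-1})_{\infty,\tilde I}$, the local best-polynomial-approximation error on the enlarged interval.

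Finally I would invoke Whitney's theorem, $d(f,\Pi_{k-1})_{\infty,J} \le W_k\,\omega_k(f,|J|;J)$ with $W_k$ depending only on $k$, applied on $J=\tilde I$. Since $\omega_k(f,\cdot;\tilde I)\le \omega_k(f,\cdot)$ and $|\tilde I|\le C_k|\Delta_n|$, the dilation property $\omega_k(f,c\delta)\le (c+1)^k\,\omega_k(f,\delta)$ absorbs the constant factor into a $k$-dependent one, giving $\|f-Q_n f\|_{\infty,I}\lesssim \omega_k(f,|\Delta_n|)$ uniformly in $\ell$. Taking the supremum over the finitely many mesh intervals produces $d(f,S_n)_\infty \le \|f-Q_n f\|_\infty \lesssim \omega_k(f,|\Delta_n|)$, and the concluding assertion follows from $\lim_{\delta\to 0}\omega_k(f,\delta)=0$ for $f\in C[0,1]$. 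The main obstacle is the stability estimate $|\lambda_i(f)|\le C_k\|f\|_{\infty,J_i}$ with $C_k$ independent of the knot geometry: because knots may cluster arbitrarily (with multiplicities up to $k-1$), one cannot use any quasi-uniformity, and this uniform boundedness of the dual B-spline functionals — equivalently, de Boor's bound on the $L^\infty$-norm of the inverse B-spline Gram matrix — is the nontrivial input that legitimizes the implied constant depending only on $k$.
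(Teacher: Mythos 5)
Your argument is the standard proof of this Jackson-type estimate and is essentially the one behind Schumaker's Theorem~6.27, which the paper cites without reproving: a quasi-interpolant built from locally supported, uniformly bounded dual functionals, polynomial reproduction, localization to the union $\tilde I$ of the relevant B-spline supports, and Whitney's theorem on $\tilde I$. The one imprecision is that the raw de Boor--Fix functional involves derivatives of $f$ and hence is neither defined on all of $C[0,1]$ nor bounded by $\|f\|_{\infty,J_i}$; to get the stated stability bound with a constant depending only on $k$ one must precompose it with local polynomial interpolation at $k$ points chosen in a largest knot subinterval of $J_i$, which is exactly how the mesh-independent constant is obtained.
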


Denote by $(N_i^{(n)})_i$ the B-spline basis of $S_n$ normalized such that it
forms a partition of unity and by $(N_i^{(n)*})_i$ its
corresponding dual basis in $S_n$.  Observe that 
\begin{equation*}
	P_n f(t) = \sum_i \langle f, N_i^{(n)}\rangle N_i^{(n)*}(t),\qquad f\in
	L^2.
\end{equation*}
Since the B-spline functions $N_i^{(n)}$ are contained in $C[0,1],$ we can
also insert $L^1$-functions as well as measures in the above formula.

If we set $a_{ij}^{(n)} = \langle
N_i^{(n)*},N_j^{(n)*}\rangle$, we can expand the dual B-spline functions as a
linear combination of B-spline functions with those coefficients:
\begin{equation}\label{eq:formuladual}
	N_i^{(n)*} = \sum_j a_{ij}^{(n)} N_j^{(n)}.
\end{equation}
Moreover,
for $t\in [0,1]$ denote by $I_n(t)$ a smallest grid point interval of positive
length in the grid $\Delta_n$
that contains the point $t$.  We denote by  $i_n(t)$ the largest index $i$ such that
$I_n(t)\subset \supp N_i^{(n)}$. 
Additionally, denote by $h_{ij}^{(n)}$ the length of the convex
hull of the union of the supports of $N_{i}^{(n)}$ and $N_{j}^{(n)}$. 

With this notation, we can give the following estimate for the numbers $a_{ij}^{(n)}$ and, a
fortiori, for $N_i^{(n)*}$:
\begin{thm}[\cite{PassenbrunnerShadrin2014}]\label{thm:boundgram}
There exists $ q \in ( 0,1) $ depending only on the spline order $k $, such that the numbers $a_{ij}^{(n)} = \langle N_i^{(n)*}, N_j^{(n)*}\rangle$
	satisfy the inequality
	\[
		|a_{ij}^{(n)}| \lesssim \frac{q^{|i-j|}}{h_{ij}^{(n)}},
	\]
	and therefore, in particular,  for all $i$,
	\[
		|N_i^{(n)*}(t)| \lesssim
		\frac{q^{|i-i_n(t)|}}{\max\big(\lambda(I_n(t)),\lambda(\supp
			N_i^{(n)})\big)},\qquad t\in [0,1].
	\]
\end{thm}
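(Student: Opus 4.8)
The plan is to exploit the fact that $(a_{ij}^{(n)})$ is exactly the inverse of the B-spline Gram matrix. Writing $G=(g_{ij})$ with $g_{ij}=\ind{N_i^{(n)},N_j^{(n)}}$ and expanding $N_i^{(n)*}=\sum_l c_{il}N_l^{(n)}$, the biorthogonality $\ind{N_i^{(n)*},N_j^{(n)}}=\delta_{ij}$ forces $C=G^{-1}$, and then $a_{ij}^{(n)}=\ind{N_i^{(n)*},N_j^{(n)*}}=(G^{-1}GG^{-1})_{ij}=(G^{-1})_{ij}$. Two structural features of $G$ drive the argument: it is symmetric positive definite, and it is banded, $g_{ij}=0$ whenever $|i-j|\ge k$, since $\supp N_i^{(n)}\cap\supp N_j^{(n)}$ has measure zero in that range. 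Thus the whole theorem reduces to geometric off-diagonal decay of the entries of the inverse of a banded SPD matrix, with the sharp mesh-dependent normalization $h_{ij}^{(n)}$ in the denominator.

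First I would record the uniform conditioning of the normalized matrix. With $D=\operatorname{diag}(g_{ii})$ and $\widetilde G=D^{-1/2}GD^{-1/2}$, de Boor's stability of the B-spline basis gives $\|\sum_i c_iN_i^{(n)}\|_2^2\asymp\sum_i|c_i|^2 g_{ii}$ with constants depending only on $k$, i.e.\ $\widetilde G$ has condition number bounded in terms of $k$ alone; the upper bound is immediate from the bounded overlap ($\le k$) of the supports. A Demko--Moss--Smith type argument for banded matrices of bounded condition number then yields $q\in(0,1)$, depending only on $k$, with $|(\widetilde G^{-1})_{ij}|\lesssim q^{|i-j|}$, hence $|a_{ij}^{(n)}|\lesssim q^{|i-j|}/\sqrt{g_{ii}g_{jj}}\asymp q^{|i-j|}/\sqrt{h_{ii}^{(n)}h_{jj}^{(n)}}$.

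The main obstacle is that this last bound, carrying $\sqrt{h_{ii}^{(n)}h_{jj}^{(n)}}$, is genuinely weaker than the asserted one for a non-uniform mesh: if two short supports are separated by long intervals, then $h_{ij}^{(n)}$ is far larger than the geometric mean of the individual support lengths, so the $\ell^2$ estimate does not suffice. To insert the correct local scale I would pass to mesh-weighted $\ell^\infty$ norms, where the governing input is Shadrin's Theorem~\ref{thm:shadrin}: the bound $\sup_n\|P_n\|_{L^1\to L^1}\lesssim 1$, equivalently $\|P_n\colon L^\infty\to L^\infty\|\lesssim 1$ by self-adjointness, translates into a uniform bound for $G^{-1}$ measured against the weights $h_i^{(n)}$, in particular $\|N_i^{(n)*}\|_\infty\lesssim 1/h_{ii}^{(n)}$. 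Combining this with bandedness through a localization and telescoping argument---comparing $N_i^{(n)*}$ with the dual function of a grid truncated away from the index $i$, and estimating the truncation error, which decays geometrically at a rate controlled by $k$---produces the mesh-adapted denominator $h_{ij}^{(n)}$ and the stated decay of $a_{ij}^{(n)}$. I expect this upgrade from $\sqrt{h_{ii}^{(n)}h_{jj}^{(n)}}$ to $h_{ij}^{(n)}$, where Shadrin's theorem is indispensable, to be the crux.

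Finally, the pointwise estimate for $N_i^{(n)*}$ follows by substituting the matrix bound into $N_i^{(n)*}(t)=\sum_j a_{ij}^{(n)}N_j^{(n)}(t)$. For fixed $t$ only the at most $k$ indices $j$ with $t\in\supp N_j^{(n)}$ contribute, and these satisfy $|j-i_n(t)|\le k$, so $q^{|i-j|}\lesssim q^{|i-i_n(t)|}$ after absorbing the factor $q^{-k}$. For each such $j$ the convex hull defining $h_{ij}^{(n)}$ contains both $\supp N_i^{(n)}$ and $I_n(t)$, whence $h_{ij}^{(n)}\gtrsim\max(\lambda(I_n(t)),\lambda(\supp N_i^{(n)}))$; using $0\le N_j^{(n)}\le 1$ and summing the at most $k$ terms gives the claimed bound.
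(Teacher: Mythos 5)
Your derivation of the second inequality from the first is correct and is exactly the paper's argument: the authors dispose of that part in one sentence (``insert the first bound into formula \eqref{eq:formuladual}''), and your execution --- at most $k$ indices $j$ with $N_j^{(n)}(t)\neq 0$, all satisfying $|j-i_n(t)|\leq k$ so that $q^{|i-j|}\lesssim q^{|i-i_n(t)|}$ after absorbing $q^{-k}$, together with $h_{ij}^{(n)}\geq \max\big(\lambda(I_n(t)),\lambda(\supp N_i^{(n)})\big)$ because the convex hull contains both $\supp N_i^{(n)}$ and $\supp N_j^{(n)}\supset I_n(t)$ --- supplies precisely the details the paper leaves implicit.

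The divergence, and the problem, is in the first inequality. The paper does not prove it: it is imported wholesale from \cite{PassenbrunnerShadrin2014}, where it is the main technical estimate. You instead attempt a proof, and it has a genuine hole exactly at the step you yourself flag as ``the crux.'' The Demko--Moss--Smith argument legitimately yields $|a_{ij}^{(n)}|\lesssim q^{|i-j|}/\sqrt{g_{ii}g_{jj}}\asymp q^{|i-j|}/\sqrt{\lambda(\supp N_i^{(n)})\,\lambda(\supp N_j^{(n)})}$, and you correctly observe that on a non-uniform mesh this is strictly weaker than the claimed bound with $h_{ij}^{(n)}$ in the denominator. But the proposed repair --- ``combining Shadrin's theorem with bandedness through a localization and telescoping argument, comparing $N_i^{(n)*}$ with the dual function of a truncated grid'' --- is a statement of intent, not an argument: you do not define the comparison object, do not show why the truncation error decays geometrically at a rate depending only on $k$, and do not explain how the correct local scale $h_{ij}^{(n)}$ emerges from the error terms. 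That upgrade is the actual content of the cited theorem, and its proof in \cite{PassenbrunnerShadrin2014} is a substantial argument combining Shadrin's theorem with fine properties (total positivity, sign structure) of the inverse of the banded Gram matrix; it cannot be waved through. If your aim is to justify the statement as used in this paper, the efficient route is the paper's own: cite the first inequality and prove the second as you did.
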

\begin{proof}
	The first inequality is proved in \cite{PassenbrunnerShadrin2014} and the second one is an easy
	consequence of the first one inserted in  formula \eqref{eq:formuladual} for
	$N_i^{(n)*}$.
\end{proof}

An almost immediate consequence of this estimate is the following pointwise maximal
inequality for $P_n g$:
\begin{thm}[\cite{PassenbrunnerShadrin2014}]\label{thm:maximal}
	For all $g\in L^1_X$,
\[
	\sup_n \|P_n g(t)\|_X \lesssim \mathcal M g(t),\qquad t\in[0,1],
\]
where $\mathcal M g(t) = \sup_{I\ni t} \frac{1}{\lambda(I)}\int_I \|g(s)\|_X\dif s$ denotes
the Hardy-Littlewood maximal function.
\end{thm}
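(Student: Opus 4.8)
The plan is to derive the pointwise maximal inequality from the B-spline estimate in Theorem~\ref{thm:boundgram}, so the entire task reduces to bounding a single $\sup_n\|P_n g(t)\|_X$ by the Hardy–Littlewood maximal function $\mathcal M g(t)$. Starting from the representation
$$
P_n g(t) = \sum_i \langle g, N_i^{(n)}\rangle\, N_i^{(n)*}(t),
$$
I would take norms in $X$ and use the triangle inequality together with the bound on the dual basis functions. The key point is that $\langle g, N_i^{(n)}\rangle = \int_0^1 g(s) N_i^{(n)}(s)\dif s$ is an $X$-valued integral, so $\|\langle g, N_i^{(n)}\rangle\|_X \le \int_{\supp N_i^{(n)}} \|g(s)\|_X \dif s$, using that $0\le N_i^{(n)}\le 1$ (partition of unity). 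Combining this with the second inequality of Theorem~\ref{thm:boundgram} gives, for fixed $t$ and $n$,
$$
\|P_n g(t)\|_X \lesssim \sum_i q^{|i-i_n(t)|}\,
\frac{1}{\max\big(\lambda(I_n(t)),\lambda(\supp N_i^{(n)})\big)} \int_{\supp N_i^{(n)}} \|g(s)\|_X \dif s.
$$

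Next I would estimate each summand by $\mathcal M g(t)$. Here the geometry of B-splines enters: $\supp N_i^{(n)}$ is an interval containing $I_n(t)$-adjacent knots, and since its length dominates the denominator when the max picks out $\lambda(\supp N_i^{(n)})$, each averaged integral $\frac{1}{\lambda(\supp N_i^{(n)})}\int_{\supp N_i^{(n)}}\|g\|_X$ is an average of $\|g\|_X$ over an interval containing $t$, hence $\le \mathcal M g(t)$. When the max instead equals $\lambda(I_n(t))$, one must check that the support is comparable to $I_n(t)$ for those indices, or absorb the discrepancy into the geometric factor; in either case the resulting quantity is controlled by an average of $\|g\|_X$ over a suitable interval through $t$. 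The upshot is a bound of the form
$$
\|P_n g(t)\|_X \lesssim \mathcal M g(t) \sum_i q^{|i-i_n(t)|},
$$
and the geometric series $\sum_i q^{|i-i_n(t)|}$ sums to a constant depending only on $q$ (hence only on $k$), uniformly in $n$ and $t$. Taking the supremum over $n$ then yields the claim.

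\textbf{The main obstacle} I anticipate is the bookkeeping in the previous step: carefully verifying that for each index $i$ the interval over which one averages $\|g\|_X$ genuinely contains $t$ (so that the Hardy–Littlewood maximal function dominates it), and that the length appearing in the denominator is comparable to the length of that averaging interval. The off-diagonal indices $i$ far from $i_n(t)$ correspond to B-splines whose supports may be much larger than $I_n(t)$, but those are precisely the terms damped by $q^{|i-i_n(t)|}$; one needs the support length to grow at most controllably relative to the geometric decay. Making this comparison rigorous — i.e., showing that $\frac{1}{\max(\lambda(I_n(t)),\lambda(\supp N_i^{(n)}))}\int_{\supp N_i^{(n)}}\|g\|_X \lesssim q^{-\varepsilon|i-i_n(t)|}\mathcal M g(t)$ with a decay rate still summable against $q^{|i-i_n(t)|}$ — is the technical heart of the argument. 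Since this estimate is attributed to \cite{PassenbrunnerShadrin2014}, I would expect the proof to be brief, citing that the inequality follows directly by inserting Theorem~\ref{thm:boundgram} into the B-spline expansion and summing the resulting geometric series against the maximal-function bound.
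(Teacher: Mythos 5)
The paper itself gives no proof of this statement (it is quoted from \cite{PassenbrunnerShadrin2014} and called an ``almost immediate consequence'' of Theorem~\ref{thm:boundgram}), so the comparison is with the standard argument, which your proposal does not quite reproduce: there is a genuine gap at exactly the point you flag as the ``technical heart.'' The intermediate inequality you hope for,
\begin{equation*}
\frac{1}{\max\big(\lambda(I_n(t)),\lambda(\supp N_i^{(n)})\big)}\int_{\supp N_i^{(n)}}\|g\|_X\dif s \;\lesssim\; q^{-\varepsilon|i-i_n(t)|}\,\mathcal M g(t),
\end{equation*}
is false. The problem is that $\supp N_i^{(n)}$ need not contain $t$, and the gap between $\supp N_i^{(n)}$ and $I_n(t)$ can consist of very few but very long knot intervals, so that $|i-i_n(t)|$ stays bounded (no help from the geometric factor) while the averaging interval through $t$ must be much longer than $\max(\lambda(I_n(t)),\lambda(\supp N_i^{(n)}))$. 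Concretely: put $k$ knot intervals of total length $\delta$ forming $\supp N_i^{(n)}=[a,a+\delta]$, then a single knot interval of length about $1/2$, then $t$ in a knot interval of length $\delta$, and take $g=\charfun_{[a,a+\delta]}$. Your term is of size $q^{O(k)}\delta^{-1}\cdot\delta = q^{O(k)}$, while $\mathcal M g(t)\sim\delta\to 0$. The second inequality of Theorem~\ref{thm:boundgram} is genuinely lossy here (the true bound on $|N_i^{(n)*}(t)|$ in this configuration is $\sim q^{O(k)}/h_{i,i_n(t)}^{(n)}\sim q^{O(k)}$, not $q^{O(k)}/\delta$), so no argument built on it alone can close the estimate.

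The fix is to use the \emph{first} inequality of Theorem~\ref{thm:boundgram} and keep the full denominator $h_{ij}^{(n)}$: write $P_n g(t)=\sum_{i,j}a_{ij}^{(n)}\langle g,N_i^{(n)}\rangle N_j^{(n)}(t)$, so that
\begin{equation*}
\|P_n g(t)\|_X \;\lesssim\; \sum_{i,j}\frac{q^{|i-j|}}{h_{ij}^{(n)}}\Big(\int_{\supp N_i^{(n)}}\|g(s)\|_X\dif s\Big) N_j^{(n)}(t).
\end{equation*}
Only indices $j$ with $t\in\supp N_j^{(n)}$ contribute, and for those the interval $\co\big(\supp N_i^{(n)}\cup\supp N_j^{(n)}\big)$ has length $h_{ij}^{(n)}$, contains $t$, and contains $\supp N_i^{(n)}$; hence $h_{ij}^{(n)-1}\int_{\supp N_i^{(n)}}\|g\|_X\dif s\leq \mathcal M g(t)$. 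Summing $\sum_{i,j}q^{|i-j|}N_j^{(n)}(t)\lesssim\sum_j N_j^{(n)}(t)=1$ by the partition of unity finishes the proof. This is also how the paper itself deploys the Gram bound later (see the displayed sum at the start of Section~\ref{sec:measure}); the rest of your outline (bounding $\|\langle g,N_i^{(n)}\rangle\|_X$ by $\int_{\supp N_i^{(n)}}\|g\|_X$ and summing a geometric series) is fine.
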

This result and Theorem \ref{thm:jackson}, 
 combined with Theorem \ref{thm:weaktype},
imply the a.e. convergence of $P_ng$ to $g$ for any
$L^1$-function $g$ provided that the point sequence $(t_i)$ is dense in the unit
interval $[0,1]$, cf. \cite{PassenbrunnerShadrin2014}.

As the spline spaces $S_n$ form an increasing sequence of subspaces of $L^2$, we
can write the B-spline function $N_i^{(n)}$ as a linear combination of the finer
B-spline functions $(N_j^{(n+1)})$. The exact form of this expansion is given by
Böhm's algorithm \cite{Boehm1980} and it states in particular that the following
result is valid:

\begin{prop}\label{thm:boehm}
	Let $f = \sum_i \alpha_i N_i^{(m)}\in S_m$ for some $m$. Then, there
	exists a sequence $(\beta_i)$ of coefficients so that
	\begin{equation*}
		f \equiv \sum_i \beta_i N_i^{(m+1)}
	\end{equation*}
	and, for all $i$, $\beta_i$ is a convex combination of $\alpha_{i-1}$
	and $\alpha_i$.
\end{prop}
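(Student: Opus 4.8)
The plan is to realize the single refinement step from $\Delta_m$ to $\Delta_{m+1}$ as the insertion of exactly one knot into the augmented knot sequence defining $S_m$, and then to invoke the single-knot form of Böhm's algorithm, checking that the induced coefficient transformation is convex. Write the augmented, nondecreasing knot sequence attached to $\Delta_m$ as $(u_j)$, so that $N_i^{(m)}$ is the order-$k$ B-spline supported on $[u_i,u_{i+k}]$, and let $\bar u := t_{m+1}$ be the one point by which $\Delta_{m+1}$ differs from $\Delta_m$; say $\bar u\in[u_\ell,u_{\ell+1}]$. The sequence $(\hat u_j)$ associated to $\Delta_{m+1}$ is then obtained from $(u_j)$ by inserting $\bar u$, and $(N_i^{(m+1)})_i$ is the B-spline basis built on $(\hat u_j)$.

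First I would record the classical knot-insertion identity (Böhm's algorithm \cite{Boehm1980}): for every $f=\sum_i\alpha_i N_i^{(m)}$ one has $f=\sum_i\beta_i N_i^{(m+1)}$ with
\begin{equation*}
	\beta_i = w_i\alpha_i + (1-w_i)\alpha_{i-1},\qquad
	w_i = \begin{cases}
		1, & i \le \ell-k+1,\\[2pt]
		\dfrac{\bar u - u_i}{u_{i+k-1}-u_i}, & \ell-k+2 \le i \le \ell,\\[4pt]
		0, & i \ge \ell+1.
	\end{cases}
\end{equation*}
I could take this identity as given from \cite{Boehm1980}; alternatively it follows from the Cox--de Boor recurrence applied to the two knot sequences, or from a blossoming argument, and I would include whichever derivation is shortest.

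The crux of the matter is then purely the inequality $0\le w_i\le 1$, since this is exactly the assertion that $\beta_i$ is a convex combination of $\alpha_{i-1}$ and $\alpha_i$ (the boundary cases $w_i\in\{0,1\}$ being degenerate convex combinations, which take care of the coefficients that are merely kept or shifted by one index). For the only nontrivial range $\ell-k+2\le i\le\ell$ I would argue from the location of $\bar u$: since $i\le\ell$ gives $u_i\le u_\ell\le\bar u$, while $i\ge\ell-k+2$ gives $i+k-1\ge\ell+1$, hence $\bar u\le u_{\ell+1}\le u_{i+k-1}$; thus $u_i\le\bar u\le u_{i+k-1}$ and $w_i\in[0,1]$, proving the claim.

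The main obstacle I anticipate is not analytic but combinatorial: keeping the index bookkeeping consistent through the insertion when interior knots carry multiplicity up to $k-1$ and the boundary knots carry multiplicity $k$. In particular, when $u_i=u_{i+k-1}$ the weight $w_i$ is a priori undefined, but in that case the support of $N_i^{(m)}$ is degenerate and that term is absent, so the convexity statement is vacuous there; I would treat these coincidences explicitly and confirm that the reindexing $i\mapsto\{i-1,i\}$ between the two bases is the correct one. Once the indexing is pinned down, the proposition is immediate from the displayed formula together with $w_i\in[0,1]$.
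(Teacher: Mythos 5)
Your proof is correct and is essentially the route the paper takes: the paper offers no argument beyond citing B\"ohm's knot-insertion algorithm, and your explicit weights $w_i$ together with the verification $u_i\le\bar u\le u_{i+k-1}$ on the nontrivial index range $\ell-k+2\le i\le\ell$ are exactly what that citation encapsulates. One small quibble: $u_i=u_{i+k-1}$ does not by itself make $\supp N_i^{(m)}=[u_i,u_{i+k}]$ degenerate, but the case is vacuous anyway, since for $i$ in the nontrivial range it would force $\bar u=t_{m+1}$ to coincide with a knot of multiplicity at least $k$, which the paper's multiplicity restriction on interior knots excludes.
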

By induction, an immediate consequence of this result is  
\begin{cor}\label{cor:boehm}
	For any positive integers $n\geq m$ and any index $i$,
	the B-spline function $N_i^{(m)}$ can be represented as
	\begin{equation*}
		N_i^{(m)} \equiv \sum_j \lambda_j N_j^{(n)},
	\end{equation*}
	with coefficients $\lambda_j\in [0,1]$ for all $j$.
\end{cor}

In the following theorem it is convenient to display explicitly the order $k$ of the B-splines 
$N_i^{(n)}=N_{i,k}^{(n)}$. The relation between the sequences $(N_{i,k}^{(n)})_i$ and  $(N_{i,k-1}^{(n)})_i$ is given by  well known recursion formulae, for which we refer  \cite{DeVoreLorentz1993}. 
See also \cite{Schumaker2007}.
\begin{thm}\label{thm:recursion}
Let $[a,b]=\supp N_{i,k}^{(n)}$. Then, the B-spline function  $N_{i,k}^{(n)}$ of order $k$ can be expressed
in terms of two B-spline functions of order $k-1$ as follows:
\begin{equation*}
N_{i,k}^{(n)}(t) = \frac{t-a}{\lambda(\supp N_{i,k-1}^{(n)})} N_{i,k-1}^{(n)}(t) + \frac{b-t}{\lambda(\supp N_{i+1,k-1}^{(n)})} N_{i+1,k-1}^{(n)}(t).
\end{equation*}
\end{thm}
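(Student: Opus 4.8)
The plan is to recognize the asserted identity as nothing but the Cox--de Boor recursion for the partition-of-unity B-splines, rewritten so that the knot differences appearing in the classical formula are expressed through the Lebesgue measures of the supports of the lower order B-splines. I would begin by fixing the (augmented) knot sequence $\tau = (\tau_j)$ underlying the grid $\Delta_n$ and recalling the definition of the $N_{i,k}^{(n)}$ from the bottom up: one sets $N_{i,1}^{(n)} = \charfun_{[\tau_i,\tau_{i+1})}$ and defines, for $k\ge 2$,
$$ N_{i,k}^{(n)}(t) = \frac{t-\tau_i}{\tau_{i+k-1}-\tau_i}\, N_{i,k-1}^{(n)}(t) + \frac{\tau_{i+k}-t}{\tau_{i+k}-\tau_{i+1}}\, N_{i+1,k-1}^{(n)}(t), $$
with the usual convention that a summand is omitted whenever its denominator vanishes. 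This is precisely the recursion recorded in \cite{DeVoreLorentz1993}, and it produces exactly the partition-of-unity normalization used throughout the paper.

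It then remains only to translate the knot differences into support lengths. The order $k$ B-spline $N_{i,k}^{(n)}$ is supported on $[\tau_i,\tau_{i+k}]$, so with $[a,b]=\supp N_{i,k}^{(n)}$ we read off $a=\tau_i$ and $b=\tau_{i+k}$. Likewise $\supp N_{i,k-1}^{(n)} = [\tau_i,\tau_{i+k-1}]$ and $\supp N_{i+1,k-1}^{(n)} = [\tau_{i+1},\tau_{i+k}]$, whence
$$ \lambda(\supp N_{i,k-1}^{(n)}) = \tau_{i+k-1}-\tau_i, \qquad \lambda(\supp N_{i+1,k-1}^{(n)}) = \tau_{i+k}-\tau_{i+1}. $$
Substituting $a$, $b$ and these two lengths into the displayed recursion turns the coefficients $\frac{t-\tau_i}{\tau_{i+k-1}-\tau_i}$ and $\frac{\tau_{i+k}-t}{\tau_{i+k}-\tau_{i+1}}$ into $\frac{t-a}{\lambda(\supp N_{i,k-1}^{(n)})}$ and $\frac{b-t}{\lambda(\supp N_{i+1,k-1}^{(n)})}$, which is the claimed formula.

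The only genuine point requiring care, and the step I would treat most carefully, is the bookkeeping at the endpoints of $[0,1]$. The grid $\Delta_n$ carries the boundary points $0$ and $1$ with multiplicity $k$, which is one more than the multiplicity natural for order $k-1$ splines; consequently one must ensure that the symbols $N_{i,k-1}^{(n)}$ and $N_{i+1,k-1}^{(n)}$ denote the lower order B-splines built on the \emph{same} knot sequence $\tau$ that underlies $N_{i,k}^{(n)}$. I would handle this by committing to the common knot sequence throughout and invoking the convention $0/0=0$: whenever a pair of coincident boundary knots forces $\tau_{i+k-1}=\tau_i$ or $\tau_{i+k}=\tau_{i+1}$, the corresponding lower order B-spline degenerates to the zero function, its support has measure zero, and the associated summand simply drops out. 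With this convention the recursion from \cite{DeVoreLorentz1993} holds verbatim for every admissible index $i$, including the boundary indices, and the proof is complete.
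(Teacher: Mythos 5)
Your proposal is correct and matches the paper's treatment: the paper also states this as the well-known Cox--de Boor recursion, simply citing \cite{DeVoreLorentz1993} and \cite{Schumaker2007} rather than proving it, with the only content being the translation of knot differences into support lengths, which you carry out correctly. Your extra care about coincident boundary knots and the $0/0=0$ convention is a reasonable addition but not something the paper spells out.
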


\section{Convergence of $P_n g$}\label{sec:convFunction}
As we are considering arbitrary sequences of grid points $(t_i)$ which are not
necessarily dense in $[0,1]$, as a first stage in the proof of the Spline
Convergence Theorem, we examine the convergence of $P_n g$ for $g\in L^1_X$.

We first notice that $P_n g$ converges in $L^1$. Indeed, this is a consequence of
the uniform boundedness of $P_n$ on $L^1$ as we will
now show.
Observe that for $g\in L^2$, we get that if we define $S_\infty$ as the $L^2$ closure of $\cup S_n$ 
and $P_\infty$ as the orthogonal projection onto $S_\infty$,
\begin{equation*}
	\| P_n g - P_\infty g\|_{L^2} \to 0.
\end{equation*}
Next, we show that this definition of $P_\infty$ can be extended to $L^1$
functions $g$. So, let $g\in L^1$ and $\varepsilon>0$. Since $L^2$ is dense in
$L^1$, we can choose $f\in L^2$ with the property $\|g-f\|_1<\varepsilon$.
Now, choose $N_0$ sufficiently large that for all $m,n>N_0$, we have
$\|(P_n-P_m)f\|_2<\varepsilon$. Then, we obtain
\begin{align*}
	\| (P_n - P_m) g \|_{L^1} &\leq \|(P_n-P_m)(g-f)\|_{L^1} +
	\|(P_n-P_m)f\|_{L^1} \\
	&\leq 2C\varepsilon + \|(P_n-P_m)f\|_{L^2} \\
	&\leq (2C+1)\varepsilon
\end{align*}
for a constant $C$ depending only on $k$ by Theorem \ref{thm:shadrin}.
This means that
$P_n g$ converges in $L^1$ to some limit that we will again call $P_\infty g$. It actually coincides with the operator $P_\infty$ on $L^2$ and satisfies the same $L^1$ bound as the sequence $(P_n)$.
Summing up we have 
\begin{equation*}
	\| P_n g - P_\infty g\|_{L^1} \to 0,
\end{equation*}
for any $ g \in L^1. $ Applying Lemma \ref{lem:extension} to $(P_n -P_\infty)$ 
we obtain the following  vector valued extension.  For any Banach space $X$ 
\begin{equation*}
	\| P_n g - P_\infty g\|_{L^1_X} \to 0,
\end{equation*}
for  $ g \in L^1_X . $

The next step is to show pointwise convergence of $P_ng$ for continuous
functions~$g$.
We define $U$ to be the complement of the  set of all accumulation points 
of the given knot sequence $(t_i)$.
This set $U$ is open, so it can be
written as a disjoint union of open intervals
\begin{equation*}
	U=\cup_{j=1}^\infty U_j.
\end{equation*}

\begin{lem}\label{lem:aecontinuous}
	Let $g\in C[0,1]$. Then, $P_n g$ converges pointwise a.e. to $P_\infty g$ with respect
	to Lebesgue measure.
\end{lem}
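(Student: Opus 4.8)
The plan is to leverage the already-established $L^1$ convergence $P_n g\to P_\infty g$ to reduce the statement to the bare existence of $\lim_n P_n g(t)$ for almost every $t$: once the pointwise limit is known to exist a.e., it must coincide with $P_\infty g$, since some subsequence of $(P_n g)$ converges to $P_\infty g$ a.e. The obstruction is that the grid need not be dense, so the global mesh $|\Delta_n|$ does not tend to $0$ and Theorem \ref{thm:jackson} cannot be applied directly. I would therefore split $[0,1]$ into the open set $U=\cup_j U_j$ of non-accumulation points and its complement $K$, the set of accumulation points of $(t_i)$, and treat the two pieces by entirely different mechanisms.

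On $U$ the argument is soft and yields convergence at every point. Fixing $t\in U_j=(a_j,b_j)$, I would choose a compact interval $[c,d]\subset U_j$ with $t\in(c,d)$. Since $[c,d]$ contains no accumulation point of $(t_i)$, it meets only finitely many knots, so for all $n$ beyond some $N$ the restrictions $P_n g|_{[c,d]}$ all lie in a single fixed finite-dimensional space $V$ of polynomial splines on $[c,d]$ with stationary breakpoints. On $V$ all norms are equivalent and point evaluation is continuous; hence the $L^1([c,d])$-convergence $P_n g\to P_\infty g$ (a restriction of the global $L^1$ convergence, which also forces $P_\infty g|_{[c,d]}\in V$ because $V$ is closed) upgrades automatically to pointwise convergence, giving $P_n g(t)\to P_\infty g(t)$.

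On $K$ I would separate the two-sided accumulation points $K_0$ from the rest. A point of $K\setminus K_0$ has a one-sided neighborhood meeting only finitely many knots and is therefore an endpoint $a_j$ or $b_j$ of some $U_j$; as there are countably many components, $K\setminus K_0$ is countable, hence $\lambda$-null. For $t\in K_0$ the grid refines to $0$ on both sides, so $\lambda(I_n(t))\to 0$ and, for each fixed $M$, the supports of the B-splines $N_i^{(n)}$ with $|i-i_n(t)|\le M$ shrink to $\{t\}$. Writing $P_n g(t)-g(t)=\sum_i\langle g-g(t),N_i^{(n)}\rangle N_i^{(n)*}(t)$ via $P_n 1=1$, estimating $|\langle g-g(t),N_i^{(n)}\rangle|\le \lambda(\supp N_i^{(n)})\sup_{s\in\supp N_i^{(n)}}|g(s)-g(t)|$, and inserting the decay bound of Theorem \ref{thm:boundgram} for $N_i^{(n)*}(t)$, the $i$-th term is controlled by $q^{|i-i_n(t)|}\sup_{s\in\supp N_i^{(n)}}|g(s)-g(t)|$. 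Splitting at $|i-i_n(t)|=M$, using continuity of $g$ at $t$ for the finitely many small-index terms and the geometric tail (each tail term bounded by $2\|g\|_\infty$) for the rest, gives $\limsup_n|P_n g(t)-g(t)|\lesssim \|g\|_\infty q^M$ for every $M$, whence $P_n g(t)\to g(t)$ on $K_0$.

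Combining the three cases yields a.e. convergence, and the $L^1$ limit identifies the a.e. limit as $P_\infty g$ (and shows $P_\infty g=g$ a.e. on $K$). I expect the main obstacle to be the estimate on $K_0$: one must verify cleanly that two-sided accumulation makes the relevant B-spline supports contract to $t$ and that the exponential decay of $N_i^{(n)*}$ survives the index-splitting uniformly in $n$, so the delicate part is the bookkeeping of $i_n(t)$ against the geometry of the shrinking grid.
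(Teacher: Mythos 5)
Your proof is correct, but on both halves of the decomposition you argue differently from the paper. On the open set $U$ of non-accumulation points the paper does not use finite-dimensionality at all: it combines the $L^1$-Cauchy property of $(P_ng)$ with the dual B-spline decay of Theorem \ref{thm:boundgram} and the uniform lower bound $s=\inf\{\lambda(I_n(t)):t\in A,\ n\}>0$ on compact $A\subset U_j$ to get $|(P_n-P_m)g(t)|\lesssim \|(P_n-P_m)g\|_{L^1}/s$. Your observation that the breakpoints in a compact subinterval of $U_j$ are eventually stationary, so that the restrictions live in one fixed finite-dimensional piecewise-polynomial space where $L^1$ and sup norms are equivalent, is a softer and more elementary route to the same local uniform convergence (and correctly handles the identification of $P_\infty g|_{[c,d]}$ since that space is closed). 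On the complement, both arguments discard the countably many one-sided accumulation points, but then diverge: the paper proves only that $(P_ng(t))$ is Cauchy, by approximating $g$ locally with a spline $f\in S_M$ via the Jackson-type estimate of Theorem \ref{thm:jackson} and the modulus of smoothness, and telescoping $(P_n-P_m)g=P_n(g-f)-P_m(g-f)$; you instead use reproduction of constants, $P_ng(t)-g(t)=\sum_i\langle g-g(t),N_i^{(n)}\rangle N_i^{(n)*}(t)$, together with the contraction of the supports of the near-index B-splines at a two-sided accumulation point. Your version avoids Theorem \ref{thm:jackson} entirely, uses only continuity of $g$ at the single point $t$, and yields the slightly stronger conclusion that the limit equals $g(t)$ there; the support-contraction bookkeeping you flag does go through, since at a two-sided accumulation point the $M+k$ nearest knots on each side eventually lie in any prescribed neighborhood of $t$. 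Both proofs ultimately rest on the same exponential decay of $N_i^{(n)*}$ from Theorem \ref{thm:boundgram}.
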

\begin{proof}
	We first show that on each interval $U_j$, $P_n g$ converges locally
	uniformly. Let $A\subset U_j$ be a compact subset. 
Then the definition of $U_j$ implies that  $s:= \inf\{ \lambda(I_n(t)) : t\in A, n\in\mathbb N\}$ is positive. 
	Observe that of course, since in particular $g\in L^1[0,1]$, the
	sequence $P_n g$ converges in $L^1$.
Therefore, for $\varepsilon>0$, we can choose $M$ so large that for all $n,m\geq M$, $\|P_n g -
P_m g\|_{L^1} \leq \varepsilon s$.
We then estimate by Theorem \ref{thm:boundgram} for $n\geq m\geq M$ and $t\in
A$:
	\begin{align*}
		|(P_n - P_m)g(t)| &= |P_n (P_n - P_m)g(t)| \\
		&= \Big|\sum_{i}  \langle (P_n - P_m) g,
		N_i^{(n)}\rangle N_i^{(n)*}(t)\Big| \\
		&\lesssim \sum_{i} \frac{q^{|i-i_n(t)|}}{\lambda(I_n(t))}
		\|(P_n - P_m) g\|_{L^1(\supp N_i^{(n)})} \\
		&\leq \|(P_n - P_m)g\|_{L^1([0,1])} \sum_{i}
		\frac{q^{|i-i_n(t)|}}{s} \\
		&\lesssim
		\frac{\|(P_n-P_m)g\|_{L^1[0,1]}}{s} \leq \varepsilon,
	\end{align*}
	so $P_n g$ converges uniformly on $A$.

	If $t\in U^c$, we can assume that on both sides of $t$, there is a subsequence of grid points
	converging to $t$, since if there is a side that does not have a
	sequence of grid points converging to $t$, the point $t$ would be an endpoint
	of an interval $U_j$ and the union over all endpoints of $U_j$ is 
	countable and therefore a Lebesgue zero set.
	Let $\varepsilon>0$ and let $\ell$ be such that 
\begin{equation}\label{3-11-17-3}q^\ell \|g\|_{L^\infty} \leq
	\varepsilon .\end{equation}
 We choose $M$ so large that for any $m\geq M$ on each
	side of $t$ there are $\ell$ grid points of $\Delta_m$ and each of
	those grid point intervals has the property that the length is $<\delta$
	with $\delta>0$ being such that $\omega_k(g,\delta)<\varepsilon$,
	where $\omega_k$ is the $k$th modulus of smoothness. With
	this choice, by Theorem \ref{thm:jackson}, 
	there exists  a function $f\in S_{M}$ with $\|f\|_{L^\infty}\lesssim
	\|g\|_{L^\infty}$ that approximates $g$ well on
	the smallest interval  $B$ that contains  $\ell-k$ grid points to the left of $t$  and  
        $\ell-k$ grid points to the
	right of $t$ in $\Delta_M$ in the sense that 
	\begin{equation}\label{3-11-17-2}
		\| f - g\|_{L^\infty(B)} \lesssim \omega_k(g,\delta) \leq
		\varepsilon.
	\end{equation}
	Therefore, we can write for $n,m\geq M$
	\begin{align*}
		(P_n - P_m)g(t) &= P_n(g-f)(t) + P_m(f-g)(t) .
	\end{align*}
	Next, estimate $P_n(g-f)(t)$ for $n\geq M$ by Theorem \ref{thm:boundgram}:
	\begin{align*}
		|P_n(g-f)(t)| &= \Big|\sum_{i} \langle g-f,
		N_i^{(n)}\rangle N_i^{(n)*}(t)\Big| \\
		&\lesssim  \sum_{i}
		\|g-f\|_{L^\infty(\supp(N_i^{(n)}))} \lambda(\supp
		N_i^{(n)})\frac{q^{|i-i_n(t)|}}{\lambda(\supp N_i^{(n)})} \\
		&= 
		\sum_i
		q^{|i-i_n(t)|}\|g-f\|_{L^\infty(\supp N_i^{(n)})}.
	\end{align*}
In estimating the above series we  distinguish two  cases  for the value of $ i$:	
$$|i-i_n(t)|\leq \ell-2k, \qquad\text{and}\qquad |i-i_n(t)|> \ell-2k. $$  
Using  $ \| g-f \|_{L^\infty(\supp N_i^{(n)})} \leq \|g-f
	\|_{L^\infty(B)}$ and     \eqref{3-11-17-2} we get 
$$	\sum_{i: |i-i_n(t)|\leq \ell-2k}
		q^{|i-i_n(t)|}\|g-f\|_{L^\infty(\supp N_i^{(n)})} \lesssim \varepsilon .$$
Using $\|g-f\|_{L^\infty(\supp N_i^{(n)})} \lesssim  \|g\|_{L^\infty} $ and  \eqref{3-11-17-3} gives 
$$	\sum_{i: |i-i_n(t)| >  \ell-2k}
		q^{|i-i_n(t)|}\|g-f\|_{L^\infty(\supp N_i^{(n)})}  \lesssim \varepsilon .$$
This yields $|P_n(g-f)(t)|\lesssim \varepsilon$ for $n\geq M$ and
	therefore
	$P_n g(t)$ converges as $n\to\infty$.
\end{proof}

The following theorem establishes   the spline analogue of the 
martingale results  (M1) and (M3).
The role of Lemma \ref{lem:aecontinuous} in the proof given below is to free 
the main theorem  in \cite{PassenbrunnerShadrin2014} from the restriction  that  
 the sequence of knots $(t_i)$ is dense in $[0,1]$. 

\begin{thm}\label{thm:aeconv}
Let $X$ be any Banach space. 	For $f\in L^1_X$, 
there exists $ E \subset [0,1] $ with $ \lambda (E) = 0 $ such that 
$$ \lim_{n\to \infty}  P_n f ( t) = P_{\infty} f(t) , $$ 
for any $ t \in  [0,1] \setminus E . $
\end{thm}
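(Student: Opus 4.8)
The plan is to upgrade the pointwise a.e.\ convergence from continuous functions (Lemma~\ref{lem:aecontinuous}) to arbitrary $f\in L^1_X$ via a standard maximal-function/density argument. The key tool is the pointwise maximal inequality of Theorem~\ref{thm:maximal}, which controls the oscillation operator by the Hardy--Littlewood maximal function, together with the weak-type estimate of Theorem~\ref{thm:weaktype}. Since $L^2\cap C[0,1]$-type functions are dense in $L^1_X$, we can approximate $f$ by a function for which convergence is already known and then show the exceptional set has measure zero.

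First I would fix $f\in L^1_X$ and define the oscillation
\begin{equation*}
	\Omega f(t) := \limsup_{n,m\to\infty} \|(P_n - P_m)f(t)\|_X.
\end{equation*}
The goal is to show $\Omega f = 0$ a.e., which gives that $(P_n f(t))$ is Cauchy in $X$ for a.e.\ $t$; since $X$ is complete the pointwise limit then exists, and it must agree a.e.\ with the $L^1_X$-limit $P_\infty f$ (because $L^1$ convergence, established at the start of Section~\ref{sec:convFunction}, forces a subsequence converging a.e.\ to $P_\infty f$). Next I would fix $\varepsilon>0$ and use density to choose $g\in C[0,1]$ (tensored suitably to approximate the $X$-valued function, i.e.\ $g$ in a dense subspace such as $C[0,1]\otimes X$) with $\|f-g\|_{L^1_X}<\varepsilon$. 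By Lemma~\ref{lem:aecontinuous} applied componentwise we have $\Omega g = 0$ a.e. The triangle inequality gives
\begin{equation*}
	\Omega f(t) = \Omega (f-g)(t) \leq 2\sup_n \|P_n(f-g)(t)\|_X \lesssim \mathcal M(f-g)(t),
\end{equation*}
where the last bound is Theorem~\ref{thm:maximal}.

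Consequently, for any $u>0$, the weak-type estimate of Theorem~\ref{thm:weaktype} yields
\begin{equation*}
	\lambda(\{\Omega f > u\}) \leq \lambda\big(\{\mathcal M(f-g) > cu\}\big) \lesssim \frac{\|f-g\|_{L^1_X}}{u} \lesssim \frac{\varepsilon}{u}.
\end{equation*}
Since $\varepsilon>0$ is arbitrary, $\lambda(\{\Omega f > u\}) = 0$ for every $u>0$, and letting $u\to 0$ along a sequence shows $\Omega f = 0$ a.e. This produces the null set $E$ outside of which $P_n f(t)$ converges, and the limit equals $P_\infty f(t)$ a.e.\ by the $L^1_X$-convergence remark above. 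I expect the main subtlety to be the density step in the vector-valued setting: one must approximate $f\in L^1_X$ not merely by an arbitrary continuous function but by an element for which Lemma~\ref{lem:aecontinuous} applies, and Lemma~\ref{lem:aecontinuous} is stated for scalar $C[0,1]$. The clean route is to approximate $f$ in $L^1_X$ by finite sums $\sum_i g_i \otimes x_i$ with $g_i\in C[0,1]$ and $x_i\in X$; linearity of $P_n$ and $P_n(g_i\otimes x_i)=(P_n g_i)x_i$ reduce the a.e.\ convergence of such sums to the scalar case, so $\Omega$ vanishes a.e.\ on the approximant, and the maximal inequality absorbs the remainder.
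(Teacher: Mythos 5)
Your argument is correct and follows essentially the same route as the paper: both control the oscillation of $(P_nf(t))$ by the maximal operator of Theorem~\ref{thm:maximal}, invoke the weak-type bound of Theorem~\ref{thm:weaktype}, and use density together with Lemma~\ref{lem:aecontinuous} to kill the exceptional set (the paper's sets $A^{(\ell)}(f)$ are exactly your level sets $\{\Omega f>1/\ell\}$). The only cosmetic difference is that the paper runs the density argument in two stages (scalar $C[0,1]$ in $L^1$, then $L^1\otimes X$ in $L^1_X$) whereas you approximate directly from $C[0,1]\otimes X$; both are fine.
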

\begin{proof}
The proof uses standard arguments
	involving Lemma \ref{lem:aecontinuous}, Theorems \ref{thm:maximal} and
	\ref{thm:weaktype}.
	(See 	\cite{PassenbrunnerShadrin2014}.)

       \textsc{Step 1:} (The scalar case.)  Fix  $ v \in L^1$  and $ \ell \in \bN . $ Put 
  \begin{equation*}          
  A^{(\ell)} (v) = \bigcap_N \bigcup_{m, n \ge N } \{t: | P_n v(t) - P_m v (t) | > 1/\ell \} . 
  \end{equation*} 
        By Lemma \ref{lem:aecontinuous},  for any $ u \in C[0,1] , $ 
 \begin{equation*}   
\lambda (A^{(\ell)} (v) )=   \lambda (A^{(\ell)} (v-u)) 
 \end{equation*}
Let $P^*(v -u) (t) = \sup_{n} |P_n (v -u)(t)|$. Clearly we have 
\begin{equation*}          
\lambda (A^{(\ell)} (v-u))  \le     \lambda  (\{t:2 P^*(v-u)(t) \ge 1/\ell \} ).
\end{equation*}
 By Theorem~\ref{thm:maximal},  $P^*$ is dominated pointwise by the Hardy-Littlewood
	maximal function and the latter is of weak type 1-1. Hence 
\begin{equation*}
       \lambda( \{t:P^*(v-u)(t) \ge 1/\ell \} ) \lesssim  \ell \|v -u \|_{L^1} 
	\end{equation*}
	
 Now fix   $ \varepsilon  > 0 $.   Since   $C[0,1]$ is dense in  $L^1$,
        there exists    $u \in C[0,1]$ such that $\|v -u \|_{L^1} \le \varepsilon /\ell . $
Thus,  we obtained 
$\lambda (A^{(\ell)} (v) ) < \varepsilon$ for any $ \varepsilon > 0 $, or  $\lambda (A^{(\ell)} (v) ) =0 .$
It remains to observe that   
        \begin{equation*}
\lambda (\{t:P_nv(t) \text{ does not converge}\}) = \lambda \Big(\bigcup_\ell  A^{(\ell)} (v) \Big) = 0. 
        \end{equation*}
        
\textsc{Step 2:}(Vector valued extension.)
Let $g_m = v_m \otimes x_m $ where $v_m \in L^1 $ and $ x_m \in X $
and let $ g  \in L^1\otimes X$ be given as 
 \begin{equation*}
g = \sum_{m= 1 }^M g_m . 
 \end{equation*} 
Applying  Step 1 to  $v_m$ 
shows that $P_n g(t)$ converges in $X$ for $\lambda $-almost every $t\in [0,1] $.
Taking into account that  $L^1\otimes X$ is dense in $L^1_X$, 
we may now repeat the argument above  to finish the proof. Details are as follows:  
	Fix  $ f \in L_X^1$  and $ \ell \in \bN . $ Put 
  \begin{equation*}          
  A^{(\ell)} (f) = \bigcap_N \bigcup_{m, n \ge N } \{t: \| P_n f(t) - P_m f (t) \|_X > 1/\ell \} . 
  \end{equation*}
Then 
 \begin{equation*}
\lambda	( \{t:P_nf(t) \text{ does not converge in $X$}\}) = 
\lambda \Big(\bigcup_\ell  A^{(\ell)} (f) \Big) . 
        \end{equation*}
It remains to prove   that  $ \lambda ( A^{(\ell)} (f) ) =0 .$	
To this end observe that  for   $ g  \in L^1\otimes X$ we have 
$
\lambda (A^{(\ell)} (f) )=   \lambda (A^{(\ell)} (f-g)). 
$
Define the maximal function  $P^*(f -g) (t) = \sup_n \|P_n (f -g)(t)\|_X$. Clearly we have 
\begin{equation*}          
\lambda (A^{(\ell)} (f-g))  \le     \lambda (\{t:2 P^*(f-g)(t) \ge 1/\ell \}).
\end{equation*}
 By Theorem~\ref{thm:maximal}, and the weak type 1-1 estimate for  the Hardy-Littlewood
	maximal function,
\begin{equation*}
        \lambda (\{t:P^*(f-g)(t) \ge 1/\ell \}) \lesssim  \ell \|f -g \|_{L_X^1}. 
	\end{equation*}
Fix   $ \varepsilon  > 0 $, choose 
       $g \in L^1\otimes X$ such that $\|f -g \|_{L^1_X} \le \varepsilon /\ell . $
This gives  $\lambda( A^{(\ell)} (f) ) \lesssim\varepsilon$ for any $ \varepsilon  > 0 $, 
proving that    $ \lambda( A^{(\ell)} (f) ) =0 .$	
\end{proof}

\section{B-spline constructions}
\label{sec:splineConstruction}
Recall that we defined $U$ to be the complement of the  set of all accumulation points 
of the sequence $(t_i)$.
This set $U$ is open, so it can be
written as a disjoint union of open intervals
\begin{equation*}
	U=\cup_{j=1}^\infty U_j.
\end{equation*}
Observe that, since a boundary point $a$ of $U_j$ is an accumulation point of the
sequence $(t_j)$, there exists a subsequence of grid points converging to $a$.
Let 
\begin{align*}
	B_j := \{ a\in\partial U_j : \text{ there is {\em no} sequence of grid points
	} \\
	\text{contained in $U_j$ that converges to $a$} \}
\end{align*}
Now we set $V_j := U_j \cup B_j$ and $V := \cup_{j} V_j$.

Consider an arbitrary interval $V_{j_0}$ and set $a=\inf V_{j_0}$, $b=\sup
V_{j_0}$. We define the sequences $(s_j)$ and $(s_j^{(n)})$ -- rewritten in increasing order with  multiplicities included --
to be the   points
in  $(t_j)$ and $(t_j)_{j=1}^n$, respectively, 
that are contained in  $V_{j_0}$.
If $a\in V_{j_0}$, the sequence $(s_j)$ is finite to the left
and we extend the sequences $(s_j)$ and $(s_j^{(n)})$ so that they contain 
the point $a$ $k$ times
and they are still increasing. 
Similarly, if $b\in V_{j_0}$, the sequence $(s_j)$ is finite to the right
and we extend the sequences $(s_j)$ and $(s_j^{(n)})$ so that they contain 
the point $b$ $k$ times
and they are still increasing. 
Observe that if $a\notin V_{j_0}$ or $b\notin V_{j_0}$, the sequence $(s_j)$ is 
infinite to the left or infinite to the right, respectively.
We choose the indices of the sequences $(s_j)$ and $(s_j^{(n)})$ so
that for fixed $j$ and $n$ sufficiently large, we have $s_j = s_j^{(n)}$.
Let $(\bar N_j)$ and $(\bar N_j^{(n)})$ be the sequences of B-spline
functions corresponding to the sequences $(s_j)$ and $(s_j^{(n)})$,
respectively.
Observe that the choice of the sequences $(s_j)$ and $(s_j^{(n)})$ implies for
all $j$ that 
$\bar N_j \equiv \bar N_j^{(n)}$ if $n$ is sufficiently large.
Let $(N_j^{(n)})$ be the sequence of those B-spline functions from Section
\ref{sec:prelims} whose supports intersect the set $V_{j_0}$ on a set of
positive Lebesgue measure, but do not contain 
any of the points $\partial U_{j_0}\setminus B_{j_0}$ and without loss of
generality, we assume that this sequence is enumerated in such a way that
starting index and ending index coincide with the ones of the sequence $(\bar
N_j^{(n)})_j$. Then, the relation between $(N_j^{(n)})_j$ and $(\bar N_j)_j$ is
given by the following lemma:

\begin{lem}\label{lem:convBspline}
	For all $j$, the sequence of functions $(N_{j}^{(n)}\charfun_{V_{j_0}})$ converges
	uniformly to some function that coincides with $\bar N_{j}$ on $U_{j_0}$.
\end{lem}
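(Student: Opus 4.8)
The plan is to compare, for each fixed $j$, the global B-spline $N_j^{(n)}$ (built from $\Delta_n$) with the local B-spline $\bar N_j$ (built from $(s_j)$) by tracking the $k+1$ knots that define each of them and invoking the continuous dependence of a B-spline on its knot vector. First I would dispose of the interior case: if all knots of $\bar N_j$ lie in the open interval $U_{j_0}$, then, since the grid points lying in $V_{j_0}$ occur consecutively in $\Delta_n$ and satisfy $s_j=s_j^{(n)}$ for $n$ large, the function $N_j^{(n)}$ is built from precisely the knots of $\bar N_j$; hence $N_j^{(n)}\equiv\bar N_j$ for all large $n$ and the asserted uniform convergence is immediate.

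The genuine case is when the support of $\bar N_j$ reaches a boundary point of $V_{j_0}$ lying in $B_{j_0}$; say $a=\inf V_{j_0}\in B_{j_0}$, the right endpoint being treated symmetrically. By construction the local sequence $(s_j^{(n)})$ carries the value $a$ with multiplicity $k$, so for such $j$ the B-spline $\bar N_j$ has $a$ as a knot of some multiplicity $m\in\{1,\ldots,k\}$, its remaining knots being interior points of $U_{j_0}$. The decisive geometric input is that $a\in B_{j_0}$ forces the accumulation of $(t_i)$ at $a$ to come from outside $U_{j_0}$: since $a\notin U$ it is an accumulation point of $(t_i)$, yet no grid points inside $U_{j_0}$ tend to $a$, so a sequence of grid points strictly to the left of $a$ converges to $a$.

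Next I would identify the knots of $N_j^{(n)}$. Because its support meets $V_{j_0}$ in positive measure but avoids $\partial U_{j_0}\setminus B_{j_0}$, and because its enumeration is matched to that of $\bar N_j^{(n)}$, its $m$ leftmost defining knots are the $m$ largest grid points of $\Delta_n$ not exceeding $a$, while its remaining knots coincide with the (eventually constant) interior knots of $\bar N_j$. By the left-hand accumulation just established those $m$ leftmost knots all tend to $a$, whereas the interior knots are eventually fixed; thus the knot vector of $N_j^{(n)}$ converges to that of $\bar N_j$, with the $m$ left knots coalescing into the $m$-fold knot $a$. On $V_{j_0}$ both functions are piecewise polynomials of degree $<k$ whose only breakpoints are the finitely many, eventually fixed, interior knots, so to finish I would show that on each of these finitely many polynomial pieces the coefficients of $N_j^{(n)}$ converge to those of $\bar N_j$. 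As the pieces are fixed compact intervals, coefficientwise convergence yields uniform convergence on each piece; since there are finitely many pieces and every function vanishes off its support, $N_j^{(n)}\charfun_{V_{j_0}}$ then converges uniformly on $[0,1]$ to a limit agreeing with $\bar N_j$ on $U_{j_0}$.

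The main obstacle is precisely this last step: the continuous dependence of the polynomial pieces on a \emph{degenerating} knot vector, as the $m$ left knots collapse onto the single point $a$. I would obtain it from the Curry--Schoenberg divided-difference representation $N_{i,k}(t)=(\tau_{i+k}-\tau_i)\,[\tau_i,\ldots,\tau_{i+k}](\cdot-t)_+^{k-1}$, using that a divided difference extends continuously to coalescing nodes (where it reduces to Hermite data); equivalently, one may induct on $k$ through the recursion of Theorem \ref{thm:recursion}. The only bookkeeping to verify is that the number $m$ of left knots of $N_j^{(n)}$ tending to $a$ equals the multiplicity of $a$ in the knot vector of $\bar N_j$, which is guaranteed by the matching enumeration of the two B-spline families.
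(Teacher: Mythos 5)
Your proposal is correct and follows essentially the same route as the paper: the interior case is treated identically (for supports eventually inside $V_{j_0}$ the splines are eventually constant in $n$), and for the boundary case the paper likewise reduces the claim to continuity of the B-spline as its left knots coalesce at $a\in B_{j_0}$, established by exactly the induction on the order via the recursion of Theorem~\ref{thm:recursion} that you name as your alternative route. Your primary divided-difference/coefficientwise argument is an equivalent standard device for that last step, and your explicit bookkeeping of why the left knots of $N_j^{(n)}$ tend to $a$ (grid points accumulate at $a$ only from outside $U_{j_0}$, by the definition of $B_{j_0}$) merely fills in details the paper's two-sentence proof leaves implicit.
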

\begin{proof}
	If the support of $N_i^{(n)}$ is a subset of $V_{j_0}$ for sufficiently
	large $n$, the sequence 
	$n\mapsto N_i^{(n)}$ is eventually constant and coincides by definition
	with $\bar N_i$. In the other case, this follows by the recursion
	formula (Theorem \ref{thm:recursion}) 
	for B-splines and observing that  for piecewise linear B-splines, this is 
	clear.  
\end{proof}
In view of the above lemma, we may assume that $\bar N_i$ coincides with 
the uniform limit of the sequence $(N_i^{(n)}\charfun_{V_{j_0}})$.
Define $(\bar N_j^{(n)*})$ to be the dual B-splines to
$(\bar N_j^{(n)})$.
For $t\in [0,1]$ denote by $\bar I_n(t)$ a smallest grid point interval of positive
length in the grid $( s_j^{(n)})$
that contains the point $t$.  We denote by  $\bar i_n(t)$ the largest index $i$ such that
$\bar I_n(t)\subset \supp \bar N_i^{(n)}$. 
Additionally, denote by $\bar h_{ij}^{(n)}$ the length of the convex
hull of the union of the supports of $\bar N_{i}^{(n)}$ and $\bar N_{j}^{(n)}$. 
Similarly  we let  $\bar I(t)$  denote a smallest grid point interval of positive
length in the grid $( s_j)$ containing  $t \in [0,1]$.  
We denote by  $\bar i(t)$ the largest index $i$ such that
$\bar I(t)\subset \supp \bar N_i$. 
Next, we identify dual functions to the sequence $(\bar N_j)$:
\begin{lem}\label{lem:splineDual}
	For each $j$, the sequence $\bar N_j^{(n)*}$ converges 
	uniformly on each interval $[s_i,s_{i+1}]$
	to some function $\bar N_j^*$ that satisfies 
	\begin{enumerate}
		\item 
		$\langle \bar N_j^*,\bar N_i\rangle = \delta_{ij}$ for all $i$,
	\item for all $t\in U_{j_0}$, 
		\begin{equation}\label{eq:barNDual}
			|\bar N_j^*(t)| \lesssim \frac{q^{|j-\bar
			i(t)|}}{\lambda(\bar
			I(t))}, 
		\end{equation}
	\end{enumerate}
where $q \in (0,1) $ is given by Theorem \ref{thm:boundgram}.
\end{lem}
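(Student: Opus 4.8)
The plan is to produce the dual functions $\bar N_j^*$ as limits of the finite-grid duals $\bar N_j^{(n)*}$ by a compactness argument carried out interval-by-interval, and then to upgrade subsequential convergence to convergence of the full sequence by a uniqueness argument for the limiting biorthogonal system. The starting point is that the restricted knot sequence $(s_j^{(n)})$ is itself a legitimate grid of the type treated in Section~\ref{sec:prelims} (interior multiplicities $\leq k-1$, endpoints taken $k$-fold), so Theorem~\ref{thm:boundgram} applies verbatim to it and yields the uniform-in-$n$ bound $|\bar N_j^{(n)*}(t)| \lesssim q^{|j-\bar i_n(t)|}/\lambda(\bar I_n(t))$. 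Fixing an interval $[s_i,s_{i+1}]\subset U_{j_0}$ and using that for fixed index the knots stabilize ($s_l^{(n)}=s_l$ for $n$ large, so $\bar i_n(t)=\bar i(t)$ and $\bar I_n(t)=\bar I(t)$ there), this gives a uniform bound on $\bar N_j^{(n)*}$ over $[s_i,s_{i+1}]$. On that interval $\bar N_j^{(n)*}$ is a polynomial of degree $<k$, and bounded sequences in that finite-dimensional space are precompact in the sup-norm; a diagonal argument over the countably many pairs $(i,j)$ extracts a subsequence along which $\bar N_j^{(n)*}$ converges uniformly on \emph{every} $[s_i,s_{i+1}]$ to a limit $\bar N_j^*$. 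The decay estimate \eqref{eq:barNDual} then passes to the limit at once, and property (1), $\langle \bar N_j^*,\bar N_i\rangle=\delta_{ij}$, follows by integrating over the compact support of $\bar N_i$, where both $\bar N_j^{(n)*}\to\bar N_j^*$ and $\bar N_i^{(n)}\to\bar N_i$ converge uniformly by Lemma~\ref{lem:convBspline}.

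Next I would record the coefficient picture. By the analogue of \eqref{eq:formuladual}, $\bar N_j^{(n)*}=\sum_l \bar a_{jl}^{(n)}\bar N_l^{(n)}$ with $\bar a_{jl}^{(n)}=\langle \bar N_j^{(n)*},\bar N_l^{(n)*}\rangle$; on each fixed interval only the $\lesssim k$ splines $\bar N_l$ whose supports meet it contribute, and these are linearly independent there. Hence convergence of $\bar N_j^{(n)*}$ along the chosen subsequence forces convergence of the coefficients $\bar a_{jl}^{(n)}\to\alpha_{jl}$, so that $\bar N_j^*=\sum_l\alpha_{jl}\bar N_l$ as a locally finite sum. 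Passing to the limit in the defining relation $\sum_l \bar a_{jl}^{(n)}\langle \bar N_l^{(n)},\bar N_m^{(n)}\rangle=\delta_{jm}$ is legitimate because it is a \emph{finite} sum ($\langle \bar N_l,\bar N_m\rangle=0$ once $|l-m|>k$), and yields $\sum_l \alpha_{jl}\langle \bar N_l,\bar N_m\rangle=\delta_{jm}$ for every $m$. Moreover, since the convex hull defining $\bar h_{jl}^{(n)}$ always contains the fixed support $\supp\bar N_j^{(n)}$, the bound of Theorem~\ref{thm:boundgram} gives $|\alpha_{jl}|\lesssim q^{|j-l|}/\lambda(\supp\bar N_j)$.

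The main obstacle is the final step, namely promoting the subsequential limit to convergence of the whole sequence: when $a$ or $b\notin V_{j_0}$ the grid $(s_j)$ is infinite and the spline space infinite-dimensional, so finite-dimensional uniqueness is unavailable and the limit must be pinned down directly. I would argue that the array $(\alpha_{jl})$ solving $\sum_l\alpha_{jl}\langle \bar N_l,\bar N_m\rangle=\delta_{jm}$ subject to the decay $|\alpha_{jl}|\lesssim q^{|j-l|}/\lambda(\supp\bar N_j)$ is unique. Given two such solutions, their difference $\beta=(\beta_l)$ satisfies $\sum_l\beta_l\langle \bar N_l,\bar N_m\rangle=0$ for all $m$ with $|\beta_l|\lesssim q^{|j-l|}/\lambda(\supp\bar N_j)$; combined with $0\le \bar N_l\le 1$ and the band structure $\langle \bar N_l,\bar N_m\rangle=0$ for $|l-m|>k$, the double series $\sum_{l,m}\beta_l\beta_m\langle \bar N_l,\bar N_m\rangle$ converges absolutely and equals $\|\sum_l\beta_l\bar N_l\|_{L^2}^2$, while the vanishing relation forces it to be $0$. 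Thus $\sum_l\beta_l\bar N_l=0$ in $L^2$, and local linear independence of the B-splines gives $\beta=0$. Consequently every subsequential limit produces the same coefficients $\alpha_{jl}$ and hence the same $\bar N_j^*$, so the full sequence $\bar N_j^{(n)*}$ converges uniformly on each $[s_i,s_{i+1}]$, establishing the lemma. The delicate point to execute carefully is exactly the absolute convergence of that double series, which is what the improved decay bound $|\beta_l|\lesssim q^{|j-l|}/\lambda(\supp\bar N_j)$ (rather than the naive $q^{|j-l|}/\bar h_{jl}^{(n)}$) is tailored to guarantee even as the grid intervals shrink toward the accumulation points.
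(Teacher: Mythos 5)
Your proposal is correct, but it takes a genuinely different route from the paper. The paper's proof is a direct, quantitative Cauchy argument: for $n\geq m$ it expands the coarser dual function $\bar N_j^{(m)*}$ in the finer dual basis $(\bar N_\ell^{(n)*})$, observes that the expansion coefficients $\alpha_{j\ell}$ equal $\delta_{j\ell}$ for every $\ell$ with $|\ell-\bar i(t)|\leq L-k$ (because the corresponding primal B-splines $\bar N_\ell^{(m)}\equiv\bar N_\ell^{(n)}$ have already stabilized), bounds the remaining coefficients by an absolute constant via Theorem \ref{thm:boundgram}, and concludes $|\bar N_j^{(m)*}(t)-\bar N_j^{(n)*}(t)|\lesssim q^{L}/\lambda(\bar I(t))$; properties (1) and \eqref{eq:barNDual} then pass to the limit. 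You instead extract subsequential limits by finite-dimensional compactness on each knot interval $[s_i,s_{i+1}]$ and then identify the limit through a uniqueness statement for biorthogonal coefficient arrays with geometric decay, proved via positive definiteness of the Gram form; precompactness plus uniqueness of subsequential limits then upgrades to convergence of the full sequence. Both arguments are sound: your use of Theorem \ref{thm:boundgram} on the restricted grids is exactly how the paper itself uses it, the decay $|\alpha_{j\ell}|\lesssim q^{|j-\ell|}/\lambda(\supp\bar N_j)$ does make the double Gram sum absolutely convergent (the band structure $\langle\bar N_\ell,\bar N_m\rangle=0$ for $|\ell-m|\geq k$ plus $\lambda(\supp\bar N_j)>0$ suffice), and local linear independence of B-splines finishes the uniqueness. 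What the paper's route buys is brevity and an explicit rate of convergence; what yours buys is an intrinsic characterization of $\bar N_j^*$ as the unique geometrically decaying biorthogonal system, at the cost of being non-quantitative. One small slip: the eventual identity $\bar N_i^{(n)}\equiv\bar N_i$ that you invoke when passing to the limit in $\langle\bar N_j^{(n)*},\bar N_i^{(n)}\rangle=\delta_{ij}$ comes directly from the construction of the sequences $(s_j^{(n)})$ in Section \ref{sec:splineConstruction}, not from Lemma \ref{lem:convBspline} (which concerns the global B-splines $N_i^{(n)}\charfun_{V_{j_0}}$); this does not affect the argument.
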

\begin{proof}
	We fix the index $j$, the point $t\in U_{j_0}$ and $\varepsilon>0$. 
	Next, we choose $M$ sufficiently large so that for all $m\geq M$ and all
	$\ell$ with the property $|\ell - \bar i(t)| \leq L$ we have
	$s_\ell^{(m)}= s_\ell$, where $L$ is chosen so that $q^L/\lambda(\bar
	I(t))\leq \varepsilon$ and $|j-\bar i(t)|\leq L-k$.
	For $n\geq m\geq M$, we can expand the function $\bar N_j^{(m)*}$ in the
	basis $(\bar N_i^{(n)*})$ and write
	\begin{equation}\label{eq:expansionDual}
		\bar N_j^{(m)*} = \sum_{i} \alpha_{ji} \bar N_i^{(n)*}.
	\end{equation}
We now turn to estimating the coefficients  $\alpha_{ji}$ defined by  
equation \eqref{eq:expansionDual}.
Observe that for $\ell$ with $|\ell - \bar i(t)|\leq L-k$, we have $\bar
	N_\ell^{(m)}\equiv \bar N_\ell^{(n)}$, and therefore, for such $\ell$,
	\begin{align*}
		\delta_{j\ell} &= \langle \bar N_j^{(m)*}, \bar N_\ell^{(m)}\rangle =
		\langle \bar N_j^{(m)*}, \bar N_\ell^{(n)}\rangle 
		=\sum_{i} \alpha_{ji} \langle\bar N_i^{(n)*}, \bar
		N_\ell^{(n)} \rangle = \alpha_{j\ell},
	\end{align*}
	which means that the expansion \eqref{eq:expansionDual} takes the
	form
	\begin{equation}
		\label{eq:dualestimate}
		\bar N_j^{(m)*} = \bar N_j^{(n)*} + \sum_{\ell:|\ell-\bar i(t)| > L-k}
		\alpha_{j\ell} \bar N_\ell^{(n)*}.
	\end{equation}

	Next we show that $|\alpha_{j\ell}|$ is bounded by a constant independently
	of $j,\ell$ and $m,n$. Recall   $\bar h_{ij}^{(m)} $ denotes  the length 
of the smallest interval containing     $\supp \bar N_i^{(m)} \cup \supp \bar N_j^{(m)}. $ 
By Theorem \ref{thm:boundgram}, applied to the matrix $(\bar
	a_{ij}^{(m)})=(\langle \bar N_{i}^{(m)*}, \bar N_{j}^{(m)*}\rangle)$,  we get
\begin{align*}
		|\alpha_{j\ell}| &= |\langle \bar N_j^{(m)*}, \bar
		N_\ell^{(n)}\rangle| =
		\Big|\Big\langle \sum_{i} \bar a_{ij}^{(m)} \bar N_i^{(m)}, \bar
		N_\ell^{(n)} \Big\rangle\Big| \\
		&\lesssim \sum_i \frac{q^{|i-j|}}{\bar h_{ij}^{(m)}} \langle
		\bar N_i^{(m)}, \bar N_\ell^{(n)}\rangle \leq 
		\sum_i \frac{q^{|i-j|}}{\bar h_{ij}^{(m)}} 
		\lambda(\supp \bar N_i^{(m)}) \\
		&\leq \sum_i q^{|i-j|} \lesssim 1.
	\end{align*}
	This can be used to obtain an estimate for the difference between $\bar 
	N_j^{(m)*}(t)$ and $\bar N_j^{(n)*}(t)$ by inserting it into
	\eqref{eq:dualestimate} and applying again Theorem \ref{thm:boundgram}: 
	\begin{align*}
		| (\bar N_j^{(m)*} - \bar N_j^{(n)*} )(t) | &\leq \sum_{\ell:
			|\ell-
		\bar i(t)| > L - k} |\alpha_{j\ell}| |\bar N_\ell^{(n)*}(t)| \\
		&\lesssim \sum_{\ell: |\ell-\bar i(t)|> L -k} 
			\frac{q^{|\ell-\bar i_n(t)|}}{\lambda(\bar I_n(t))} 
			\lesssim \frac{q^L}{\lambda(\bar I(t))} \leq \varepsilon.
	\end{align*}
	This finishes the proof of the convergence part. 
	Estimate \eqref{eq:barNDual} now follows from the corresponding estimate
	for $\bar N_j^{(n)*}$ in Theorem \ref{thm:boundgram}.

	Now, we turn to the proof of property $(1)$. 
	Let $j,i$ be arbitrary. Choose $M$ sufficiently large so that for all
	$n\geq M$, we have $\bar N_i \equiv \bar N_i^{(n)}$ on $U_{j_0}$, therefore, 
	\begin{align*}
| \langle \bar N_j^*, \bar
			N_i\rangle
			-\delta_{ij}| &=	
		| \langle \bar N_j^*, \bar
			N_i\rangle - \langle \bar N_j^{(n)*} , \bar
			N_i^{(n)}\rangle | 
			=|\langle \bar N_j^* - \bar N_j^{(n)*} , \bar
			N_i^{(n)}\rangle| \\
			&\leq \| \bar N_j^{*} - \bar N_j^{(n)*}
			\|_{L^\infty(\supp \bar N_i^{(n)})}\cdot \lambda(\supp \bar
			N_i^{(n)}),
	\end{align*}
	which, by the local uniform convergence of $\bar N_j^{(n)*}$ to $\bar
	N_j^*$, tends to zero.
\end{proof}

\section{A measure estimate}\label{sec:measure}
Let $ \sigma$ be a measure defined on the unit interval. Recall that $P_n(\sigma) $ 
is defined by duality. In view of Theorem~\ref{thm:boundgram}, localized and  pointwise estimates for   $P_n(\sigma) $ are controlled by  terms of the form 
 $$ \sum_{i,j} \frac{q^{|i-j|}}{h_{ij}^{(n)}}
		|\sigma|(\supp N_i^{(n)}) N_j^{(n)}.$$
Subsequently the following Lemma will be used to show that  $P_n(\sigma) $  converges a.e. to zero, 
for any measure $ \sigma$ singular to the Lebesgue measure.
\begin{lem}\label{31-10-17-3}
	Let $F_r$ be a Borel subset of $V^c$ and $\theta$ a positive measure on
	$[0,1]$ with
	$\theta(F_r)=0$
	so that for all $x\in F_r$, we have
	\begin{equation*}
		\limsup_n b_n(x)>1/r,
	\end{equation*}
	where $b_n(x)$ is a positive function satisfying
	\begin{equation*}
		b_n(x) \lesssim \sum_{i,j} \frac{q^{|i-j|}}{h_{ij}^{(n)}}
		\theta(\supp N_i^{(n)}) N_j^{(n)}(x),\qquad x\in F_r.
	\end{equation*}

	Then, $\lambda(F_r)=0$.
\end{lem}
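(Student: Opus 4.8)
The plan is to reduce the assertion to a Vitali covering argument for the Hardy--Littlewood maximal function of the measure $\theta$, after first localizing $\theta$ to a set of arbitrarily small total mass. First I would fix $\varepsilon>0$ and, using the outer regularity of the finite Borel measure $\theta$ together with the hypothesis $\theta(F_r)=0$, choose an open set $G\supset F_r$ with $\theta(G)<\varepsilon$. Writing $\theta_G=\theta|_G$, the goal becomes to prove $\lambda(F_r)\lesssim r\,\theta(G)$, since then letting $\varepsilon\to 0$ forces $\lambda(F_r)=0$.

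The heart of the matter is to convert the hypothesis $\limsup_n b_n(x)>1/r$ into the maximal bound $\mathcal M\theta_G(x)\gtrsim 1/r$ for every $x\in F_r$, where $\mathcal M\theta_G(x)=\sup_{I\ni x}\theta_G(I)/\lambda(I)$. Fix $x\in F_r$. For each $j$ with $N_j^{(n)}(x)\neq 0$ one has $|i-j|=|i-i_n(x)|+O(k)$ and $h_{ij}^{(n)}\gtrsim h_{i,i_n(x)}^{(n)}$, so the defining bound for $b_n$ collapses, up to a constant depending only on $k$, to $\sum_i q^{|i-i_n(x)|}\theta(\supp N_i^{(n)})/h_{i,i_n(x)}^{(n)}$. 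I would then split $\theta=\theta_G+\theta|_{G^c}$ in this sum. For the near part, the convex hull defining $h_{i,i_n(x)}^{(n)}$ contains both $x$ and $\supp N_i^{(n)}$, so the interval $B_i=\conv(\{x\}\cup\supp N_i^{(n)})$ satisfies $\lambda(B_i)\le h_{i,i_n(x)}^{(n)}$ and $\theta_G(\supp N_i^{(n)})\le\theta_G(B_i)$; hence each near term is dominated by $q^{|i-i_n(x)|}\theta_G(B_i)/\lambda(B_i)\le q^{|i-i_n(x)|}\mathcal M\theta_G(x)$, and summing the geometric series bounds the near part by $\mathcal M\theta_G(x)$ up to a constant.

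The far part is where the hypothesis $F_r\subset V^c$ enters, and I expect this to be the main obstacle. The construction of $V$, specifically the removal of the sets $B_j$, guarantees that every $x\in V^c$ is a \emph{two-sided} accumulation point of $(t_i)$; consequently $\lambda(I_n(x))\to 0$ and, crucially, the number of grid points of $\Delta_n$ on each side of $x$ inside a fixed neighborhood tends to infinity as $n\to\infty$. Choosing $\rho>0$ with $(x-\rho,x+\rho)\subset G$, every support meeting $G^c$ lies at Euclidean distance $\ge\rho$ from $x$ and therefore at index distance $\ge D_n$ from $i_n(x)$, with $D_n\to\infty$; bounding $h_{i,i_n(x)}^{(n)}\ge\rho$ and $\sum_i\theta(\supp N_i^{(n)})\lesssim\theta([0,1])$ shows the far part is $\lesssim q^{D_n}\theta([0,1])/\rho\to 0$. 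Thus, choosing $n=n(x)$ large enough along the sequence realizing the $\limsup$, the near part alone still exceeds a fixed multiple of $1/r$, yielding $\mathcal M\theta_G(x)\gtrsim 1/r$ for every $x\in F_r$.

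Finally I would run the covering argument. For each $x\in F_r$ the bound $\mathcal M\theta_G(x)\gtrsim 1/r$ produces an interval $I_x\ni x$ with $\lambda(I_x)\lesssim r\,\theta_G(I_x)$. Extracting by Lemma~\ref{lem:vitali} a disjoint subfamily $(I_{x_k})$ with $F_r\subset\bigcup_k 5I_{x_k}$ gives $\lambda(F_r)\le 5\sum_k\lambda(I_{x_k})\lesssim r\sum_k\theta_G(I_{x_k})=r\,\theta_G\big(\bigcup_k I_{x_k}\big)\le r\,\theta(G)<r\varepsilon$, and letting $\varepsilon\to 0$ completes the proof.
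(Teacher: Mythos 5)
Your argument is correct and follows essentially the same route as the paper's proof: localize $\theta$ to an open set of small mass by outer regularity, split into a near part controlled by ratios $\theta(I)/\lambda(I)$ over intervals $I\ni x$ and a far part killed by the geometric decay $q^{D_n}$ coming from the two-sided accumulation of grid points at ($\lambda$-almost every) point of $V^c$, and conclude with the Vitali covering lemma. The only slip is the intermediate "collapse" via $h_{ij}^{(n)}\gtrsim h_{i,i_n(x)}^{(n)}$ for $j$ with $N_j^{(n)}(x)\neq 0$, which is false in general (e.g.\ when $\supp N_{i_n(x)}^{(n)}$ is much longer than $\supp N_j^{(n)}$) and would in any case be needed in the opposite direction to yield an upper bound on $b_n$; it is, however, harmless and best omitted, since every subsequent estimate uses only that the convex hull defining $h_{ij}^{(n)}$ contains both $x$ and $\supp N_i^{(n)}$, which holds for $h_{ij}^{(n)}$ itself because $x\in\supp N_j^{(n)}$.
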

\begin{proof}
	First observe that we can assume that each point in $F_r$ can be
	approximated from both sides with points of the sequence $(t_i)$, since
	the set of points in $V^c$ for which this is not possible is a subset of
	$\cup_j \partial V_j$ and therefore of Lebesgue measure zero. 

\textsc{Step 1:}
For an arbitrary positive number $\varepsilon$, by the regularity of $\theta$, we
can take an open set $U_\varepsilon\subset [0,1]$ with  $U_\varepsilon \supset F_r$  
and $\theta(U_\varepsilon)\leq \varepsilon$. 
Then, for $x\in F_r$, we choose a ball $B_x \subset
U_\varepsilon$ with center $x$, define $s_m(x) = \{j : N_j^{(m)}(x)\neq 0\}$
and calculate
\begin{align*}
	b_m(x) & \lesssim  \sum_{i,j} \frac{q^{|i-j|}}{h_{ij}^{(m)}}\theta(\supp
	N_i^{(m)}) N_j^{(m)}(x)\\
	&\lesssim  \sum_{j\in s_m(x)}\sum_i
	\frac{q^{|i-j|}}{h_{ij}^{(m)}}\theta(\supp N_i^{(m)})
	\\
	&\lesssim  \max_{ j \in s_m(x) } \sum_{i}
	\frac{q^{|i-j|}}{h_{ij}^{(m)}}
	\theta(\supp N_i^{(m)}) \\
	&= C \max_{j\in s_m(x)} (\Sigma_{1,j}^{(m)} +
	\Sigma_{2,j}^{(m)}),
\end{align*}
for some constant $C$ and 
where 
\begin{equation*}
\Sigma_{1,j}^{(m)} := \sum_{i\in \Lambda_1^{(m)}} \frac{q^{|i-j|}}{h_{ij}^{(m)}}
\theta(\supp N_i^{(m)}),\qquad \Sigma_{2,j}^{(m)} := \sum_{i\in \Lambda_2^{(m)}} \frac{q^{|i-j|}}{h_{ij}^{(m)}}
\theta(\supp N_i^{(m)})
\end{equation*}
and 
\begin{equation*}
\Lambda_1^{(m)} = \{i: \supp N_i^{(m)}\subset B_x\},\qquad \Lambda_{2}^{(m)} = (\Lambda_1^{(m)})^c
\end{equation*}

\textsc{Step 2:} Next, we show that it is possible to choose $m$ sufficiently large 
to have $\Sigma_{2,j}^{(m)} \leq 1/(2Cr)$ for all $j\in s_m(x)$.

To do that, let $j_m\in s_m(x)$ and
observe that
\begin{align*}
\Sigma_{2,j_m}^{(m)} &= \sum_{i\in \Lambda_2^{(m)}}
\frac{q^{|i-j_m|}\theta(\supp N_i^{(m)})}{h_{ij_m}^{(m)}} 
\leq \sum_{i\in \Lambda_2^{(m)}}
\frac{q^{|i-j_m|}\theta(\supp N_i^{(m)})}{d(x,\supp N_i^{(m)})} =: A_{2,j_m}^{(m)},
\end{align*}
where $d(x,\supp N_i^{(m)})$  denotes the Euclidean distance between $x$ and $\supp N_i^{(m)}$.
Now, for $n>m$ sufficiently large, we get
\begin{align}\nonumber
A_{2,j_n}^{(n)} &= \sum_{\ell\in \Lambda_2^{(n)}}
\frac{q^{|\ell-j_n|}\theta(\supp N_{\ell}^{(n)})}{d(x,\supp N_{\ell}^{(n)})} \\
\label{31-10-17-2}&\leq \sum_{i\in\Lambda_2^{(m)}} \sum_{\substack{\ell\in\Lambda_2^{(n)}, \\ \supp N_\ell^{(n)}\subset
		\supp N_i^{(m)}}}
\frac{q^{|\ell-j_n|}\theta(\supp N_{\ell}^{(n)})}{d(x,\supp N_{\ell}^{(n)})} .
\end{align}
Define  $L_{n,m} $ to be the cardinality of  the set $ \{t_i : m < {i} \le n\}  \cap B_x \cap [0,x]
  $ 
and  $R_{n,m} $  the cardinality of  $ \{t_i : m < {i} \le n\}  \cap B_x \cap [x,1] . $ Put 
$$  K_{n,m} = \min \{L_{n,m}, R_{n,m}\} . $$ 
The term \eqref{31-10-17-2} admits the following upper bound 
\begin{align*}
&q^{K_{n,m}}\sum_{i\in\Lambda_2^{(m)}}\frac{q^{|i-j_m|}}{d(x,\supp N_i^{(m)})} \sum_{\substack{\ell\in\Lambda_2^{(n)}, \\ \supp N_\ell^{(n)}\subset
		\supp N_i^{(m)}}} \theta(\supp N_\ell^{(n)}) \\
&\lesssim q^{K_{n,m}}\sum_{i\in\Lambda_2^{(m)}}\frac{q^{|i-j_m|}}{d(x,\supp N_i^{(m)})} \theta(\supp N_i^{(m)})
= q^{K_{n,m}} A_{2,j_m}^{(m)},
\end{align*}
Since $x$ can be approximated by grid points from both sides, $\lim_{n\to\infty} K_{n,m} = \infty$, and we can choose $m$ sufficiently large
to guarantee
\begin{equation*}
\Sigma_{2,j}^{(m)} \leq A_{2,j}^{(m)} \leq \frac{1}{2Cr}.
\end{equation*}

\textsc{Step 3:} Next, we show that for any $x\in F_r$, 
there exists an open interval $C_x\subset B_x$ such that $\theta(C_x)/\lambda(C_x)\gtrsim 1/(2Cr)$.

By Step~2 and the fact that $\limsup b_n(x)>1/r$ for $x\in F_r$,
there exists an integer $m$ and an index $j_0\in s_m(x)$ with
\begin{equation*}
	\Sigma_{1,j_0}^{(m)} \geq \frac{1}{2Cr},
\end{equation*}
which means that 
\begin{align*}
	\frac{1}{2Cr}&\leq \sum_{i \in \Lambda_1^{(m)}}
	\frac{q^{|i-j_0|}}{h_{ij_0}^{(m)}}\theta(\supp N_i^{(m)}) \\
	&\leq  \sum_{i\in\Lambda_1^{(m)}}
	\frac{q^{|i-j_0|}}{h_{ij_0}^{(m)}}\theta\big(\co(\supp N_i^{(m)}\cup
	\supp N_{j_0}^{(m)})\big),
\end{align*}
where $\co(A)$ denotes the convex hull of the set $A$.
Since $\sum_{i\in\Lambda_1^{(m)}}
	q^{|i-j_0|} \lesssim 1, $ 
there exists a constant $c$ depending only on $q$ and an index $i$
with $\supp N_i^{(m)} \subset B_x$ and
\begin{equation*}
	\frac{\theta\big( \co(\supp N_i^{(m)}\cup \supp N_{j_0}^{(m)})\big)}{h_{ij_0}^{(m)}} \geq
	\frac{c}{2Cr},
\end{equation*}
which means that there exists an open interval $C_x$ with $x\in C_x\subset B_x$
with the property $\theta(C_x) / \lambda(C_x) \geq c/(2Cr)$.

\textsc{Step 4:}
Now we finish with a standard argument using the Vitali covering lemma (Lemma
\ref{lem:vitali}): there
exists a countable collection $J$ of points $x\in F_r$ such that $\{ C_x : x\in
J\}$ are disjoint sets and
\begin{equation*}
	F_r \subset \bigcup_{x\in F_r} C_x \subset \bigcup_{x\in J} 5C_x.
\end{equation*}
Combining this with Steps 1-3, we conclude
\begin{align*}
	\lambda(F_r) \leq \lambda\Big(\bigcup_{x\in J}
	5C_x\Big) \leq 5\sum_{x\in J} \lambda(C_x) \leq \frac{10Cr}{c}\sum_{x\in J}
	\theta(C_x) \leq \frac{10Cr}{c}\theta(U_\varepsilon) \leq
	\frac{10Cr}{c}\varepsilon.
\end{align*}
Since this inequality holds for all $\varepsilon>0$, we get that $\lambda(F_r)=0$.
\end{proof}

\section{Proof of the Spline Convergence Theorem}
\label{sec:proof}

In this section, we prove the Spline Convergence Theorem \ref{thm:main}.
For $f\in S_m$, a consequence of  \eqref{eq:projidentity} is 
\begin{align*}
\int_0^1 g_n(t) \cdot f(t) \dif\lambda(t) &= \int_0^1 g_n(t)\cdot P_m
f(t)\dif\lambda(t) = \int P_m g_n(t) \cdot f(t)\dif \lambda(t)\\
&= \int_0^1 g_m(t)
\cdot f(t)\dif\lambda(t),\qquad n\geq m.
\end{align*}
This means in particular that for all $f\in \cup_n
S_n$, the limit of $\int_0^1 g_n(t)\cdot f(t)\dif\lambda(t)$
exists, so we can define the linear operator 
\begin{equation*}
	T : \cup S_n \to X,\qquad  f\mapsto \lim_n \int_0^1 g_n(t)\cdot f(t) \dif\lambda(t).
\end{equation*}
By Alaoglu's theorem, we may choose a  subsequence $k_n$ such that the bounded sequence of measures $\|g_{k_n}\|_X
\dif\lambda$ converges in the weak*-topology to some scalar measure $\mu$.
Then, as each $f\in\cup_n S_n$ is continuous,
\begin{equation}
	\label{eq:uniformestimate}
	\| Tf\|_X \leq \int_{0}^1 |f(t)|\dif \mu(t),\qquad f\in \cup S_n.
\end{equation}
We let $W$ denote the $L^1([0,1] , \mu)$-closure of $\cup_n S_n$. By \eqref{eq:uniformestimate}, the operator $T$  extends to $W$    with  norm  bounded by $1. $ 

We set
\begin{equation*}
	(P_n T)(t) := \sum_{i} (T N_i^{(n)}) N_i^{(n)*}(t)
\end{equation*}
which is well defined. Moreover,
\begin{align*}
	(P_n T)(t) &= \sum_{i} (T N_i^{(n)}) N_{i}^{(n)*}(t) \\
	&=\sum_{i} \lim_m \int g_m N_i^{(n)} \dif\lambda \cdot N_i^{(n)*}(t) \\
	&=\sum_{i} \langle g_n,N_i^{(n)}\rangle N_i^{(n)*}(t) = (P_n g_n)(t) = g_n(t).
\end{align*}
Thus  we verify  a.e. convergence of $g_n$,  by   
showing   a.e. convergence
of $P_nT$ below.

\begin{lem}\label{lem:charfun}
	For all $f\in \cup S_n$, the function $f\charfun_{V_j}$ is contained in
	$W$ and also $f\charfun_V$ is contained in $W$.
	Additionally, on  the complement of $V=\cup V_j$, the $\sigma$-algebra $\mathcal F =
	\{A\in\mathcal B : \charfun_A \in W\}$ coincides with the Borel $\sigma$-algebra $\mathcal B$,
	i.e., $V^c \cap \mathcal F = V^c\cap \mathcal B$.
\end{lem}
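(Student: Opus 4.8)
The plan is to prove the three assertions in order, exploiting throughout that $W$, being a closed subspace of $L^1([0,1],\mu)$ that contains the constants, forces $\mathcal F$ to be a Dynkin ($\lambda$-)system: it contains $[0,1]$ since $\charfun_{[0,1]}=1\in S_n$, it is closed under complementation because $\charfun_{A^c}=1-\charfun_A$, and it is closed under countable \emph{disjoint} unions by dominated convergence in $L^1(\mu)$ (partial sums of indicators are in $W$ and are dominated by $\charfun_{[0,1]}$).

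First I would prove the localization $f\charfun_{V_{j_0}}\in W$ for $f\in S_m$. Expanding the restriction of $f$ to $V_{j_0}$ in the limiting B-spline basis of Section~\ref{sec:splineConstruction}, write $f=\sum_i c_i\bar N_i$ on $V_{j_0}$, where $|c_i|\lesssim\|f\|_{L^\infty}$ by stability of the B-spline basis, and set $f_n:=\sum_i c_i N_i^{(n)}\in S_n$. I claim $f_n\to f\charfun_{V_{j_0}}$ in $L^1(\mu)$. On $V_{j_0}$ this is immediate from Lemma~\ref{lem:convBspline}: each $N_i^{(n)}\charfun_{V_{j_0}}$ converges uniformly to $\bar N_i$, the sum is locally finite with uniformly bounded coefficients, and $|f_n|\le C\|f\|_{L^\infty}$ by the partition-of-unity property, so $f_n\charfun_{V_{j_0}}\to f$ pointwise on $V_{j_0}$ and dominated convergence applies.

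The delicate point, and the main obstacle, is the leakage of $f_n$ onto $V_{j_0}^c$. At an endpoint $a\in\partial U_{j_0}\setminus B_{j_0}$ there is no leakage, since by construction the associated B-splines do not contain $a$ and hence cannot cross it. At an endpoint $a\in B_{j_0}\subset V_{j_0}$ the grid accumulates at $a$ only from outside $U_{j_0}$, so any associated B-spline crossing $a$ has the part of its support lying in $V_{j_0}^c$ equal to an interval $(\tau_i,a)$ whose length tends to $0$, while the remaining portion lies in $U_{j_0}\subset V_{j_0}$. Since $|f_n|\le C\|f\|_{L^\infty}$, we obtain $\int_{V_{j_0}^c}|f_n|\dif\mu\le C\|f\|_{L^\infty}\,\mu\big((a-\delta,a)\big)$ for large $n$, which tends to $0$ because $\mu$ is finite and $(a-\delta,a)\downarrow\emptyset$; a possible atom of $\mu$ at $a$ is harmless, as $a\in V_{j_0}$. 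Summing over $j$ and applying dominated convergence to the partial sums $f\charfun_{\cup_{j\le J}V_j}$ (dominated by $\|f\|_{L^\infty}\charfun_V$) gives $f\charfun_V\in W$; with $f\equiv1$ this yields $\charfun_V\in W$ and $\charfun_{V^c}=1-\charfun_V\in W$.

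For the last assertion I would first record that every point of $V^c$ is approachable by grid points from both sides: a one-sided point would be an endpoint of some $U_j$ lying in $B_j\subset V$, contradicting membership in $V^c$. Hence, by the proof of Lemma~\ref{lem:aecontinuous}, $P_n\varphi\to\varphi$ at \emph{every} point of $V^c$ for each $\varphi\in C[0,1]$. Because $P_n\varphi\charfun_{V^c}=P_n\varphi-P_n\varphi\charfun_V\in W$ (using the already proved $f\charfun_V\in W$ with $f=P_n\varphi$) and $|P_n\varphi|\lesssim\|\varphi\|_{L^\infty}$ by Theorem~\ref{thm:maximal}, dominated convergence gives $\varphi\charfun_{V^c}\in W$; approximating $\charfun_I$ in $L^1(\mu)$ by continuous functions then produces $\charfun_{I\cap V^c}\in W$ for every interval $I$. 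Finally, the family $\mathcal F_0:=\{B\subset V^c:\charfun_B\in W\}$ is a Dynkin system on $V^c$ (using $\charfun_{V^c}\in W$) that contains the $\pi$-system $\{I\cap V^c\}$, so Dynkin's $\pi$-$\lambda$ theorem gives $V^c\cap\mathcal B\subseteq\mathcal F_0\subseteq V^c\cap\mathcal F\subseteq V^c\cap\mathcal B$, whence $V^c\cap\mathcal F=V^c\cap\mathcal B$.
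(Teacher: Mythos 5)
Your proof is correct in substance but follows a genuinely different route from the paper's in both halves, and the comparison is instructive. For the localization $f\charfun_{V_{j_0}}\in W$, the paper does not expand $f$ in the limiting basis $(\bar N_i)$ at all: it applies B\"ohm's knot-insertion algorithm (Corollary \ref{cor:boehm}) to write $N_i^{(m)}=\sum_\ell\lambda_\ell^{(n)}N_\ell^{(n)}$ with $\lambda_\ell^{(n)}\in[0,1]$ for free, and simply truncates the sum to the index set $\Lambda_n$ of Section \ref{sec:splineConstruction}; the truncation $h_n$ agrees with $N_i^{(m)}$ on a compact $A_n\subset V_{j_0}$ and is supported in an open $O_n$ with $O_n\setminus A_n\downarrow\emptyset$, so $\mu(O_n\setminus A_n)\to 0$. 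Your version needs two extra inputs — de Boor stability for $|c_i|\lesssim\|f\|_\infty$ and Lemma \ref{lem:convBspline} for convergence on $U_{j_0}$ — but the essential point, that the only leakage occurs near the $B_{j_0}$-endpoints and is killed by continuity from above of the finite measure $\mu$, is identical in both arguments. For the trace $\sigma$-algebra claim your argument is truly different: the paper constructs, for each interval $(c,d)$, a modified interval $I=(c,d)\setminus(I(c)\cup I(d))$ whose indicator lies in $W$ by a direct spline approximation governed by a sufficient condition on the endpoints, whereas you re-use the quantitative machinery of Lemma \ref{lem:aecontinuous} to get $P_n\varphi\to\varphi$ \emph{everywhere} on $V^c$ (every point of $V^c\cap(0,1)$ being a two-sided accumulation point of knots, exactly as you argue), then pass to $\varphi\charfun_{V^c}\in W$ by dominated convergence in $L^1(\mu)$ and close with Dynkin's $\pi$-$\lambda$ theorem. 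This buys something: the paper's passing remark that $\mathcal F$ ``is a $\sigma$-algebra'' is not literally justified ($W$ is a closed linear space, not an algebra, so $\mathcal F$ is a priori only a Dynkin system), and your $\pi$-$\lambda$ formulation is the clean way to make that step rigorous. Two small caveats: your blanket claim of two-sided approachability fails at the domain endpoints $0,1$ (e.g.\ $t_i=1/i$ puts $0\in V^c$ with only right-hand approach), which matters only if $\mu$ charges those two points and is repaired by the boundary-knot version of the argument in Lemma \ref{lem:aecontinuous}; and you should check that the value of the limit in Lemma \ref{lem:convBspline} at a point of $B_{j_0}$ matches $\bar N_i$ there (the paper sidesteps this by \emph{defining} $\bar N_i$ on $V_{j_0}$ as that uniform limit) in case $\mu$ has an atom at such a point. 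Both are edge cases the paper glosses over as well.
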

\begin{proof}
Since $W$ is a linear space, it suffices to show the
assertion for each B-spline function $N_i^{(m)}$ contained in some $S_m$. 
By Corollary \ref{cor:boehm}, it can be written as a linear combination of finer B-spline
functions ($n\geq m$)
\begin{equation*}
	N_i^{(m)} = \sum_\ell \lambda_\ell^{(n)} N_\ell^{(n)},
\end{equation*}
where each coefficient $\lambda_\ell^{(n)}$ satisfies the inequality 
$|\lambda_\ell^{(n)}|\leq 1$. 
 We set $$h_n :=
\sum_{\ell\in\Lambda_n} 
	 \lambda_\ell^{(n)} N_\ell^{(n)},$$
	 where the index set $\Lambda_n$ is defined to contain precisely those
	 indices $\ell$ so that $\supp N_\ell^{(n)}$ intersects $V_j$ but does
	 not contain any of the points $\partial U_j \setminus B_j$. The
	 function $h_n$ is contained in $S_n$ and satisfies $|h_n|\le 1$. Observe
	 that $\supp h_n \subset O_n$ for some open set $O_n$ and $h_n \equiv
	 N_i^{(m)}$ on some compact set $A_n\subset V_{j}$ that satisfy
	 $O_n\setminus A_n \downarrow \emptyset$ as $n\to \infty$ and thus, 
	\begin{equation*}
		\|  N_i^{(m)}\charfun_{V_{j}} - h_n \|_{L^1(\mu)} \lesssim \mu
		(O_n\setminus A_n)\to 0.
	\end{equation*}
	This shows that $N_i^{(m)}\charfun_{V_j}\in W$.
	
	Since $\mu$ is a finite measure, $\lim_{n} \mu(\cup_{j\geq n} V_j)=0$,
	and therefore, $f\charfun_V = f\charfun_{\cup_j V_j}$ is also contained
	in $W$.

	Similarly, we see that the collection $\mathcal F =\{A\in\mathcal B : \charfun_A \in W\}$
 is a $\sigma$-algebra. So, in
	order to show $V^c\cap \mathcal F = V^c\cap \mathcal B$ we will show
	that for each interval $(c,d)$ contained in $[0,1]$, we can find an interval $I\in
	\mathcal F$ with the property $V^c\cap (c,d) = V^c\cap I$.
	By the same reasoning as in the approximation of
	$N_i^{(m)}\charfun_{V_j}$
	by finer spline functions, we can give the following sufficient
	condition for an interval $I$ to be contained in $\mathcal F$:
if for all $a\in \{\inf I, \sup I\}$ we have either
\begin{equation*}
a\in I \text{ and there exists a seq. of grid points conv. from outside of $I$ to $a$}
\end{equation*}
or
\begin{equation*}
a\notin I \text{ and there exists a seq. of grid points conv. from inside of $I$ to $a$},
\end{equation*}
then $I\in\mathcal F$. 
Let now $(c,d)$ be an arbitrary interval and assume first that $c,d\notin \cup_j
\partial U_j$. For arbitrary points $x\in [0,1]$, define
\begin{equation*}
	I(x) := \begin{cases}
		V_j, &\text{if }x\in U_j, \\
		\emptyset, &\text{otherwise}.
	\end{cases}
\end{equation*}
Then, by the above sufficient criterion, the set $I = (c,d)\setminus(I(c)\cup
I(d))$ is contained in $\mathcal F$.
Moreoever, $V^c\cap (c,d) = V^c\cap I$ and this shows
that $(c,d)\cap V^c \in \mathcal F\cap V^c$.
In general, since the set $\cup_j \partial U_j$ is countable, we can find
sequences $c_n\geq c$ and $d_n\leq d$ with $c_n,d_n \notin \bigcup_j \partial
U_j$, $c_n\to c$, $d_n\to d$, and 
\begin{equation*}
	(c,d)\cap V^c = (\cup_n (c_n, d_n))\cap V^c \in\mathcal F\cap V^c,
\end{equation*}
since $\mathcal F\cap V^c$ is a $\sigma$-algebra. This shows the fact that
$\mathcal F\cap V^c = \mathcal B\cap V^c$.
\end{proof}

\begin{proof}[Proof of Theorem \ref{thm:main}]
\textsc{Part 1: $t\in V^c$:}
By Lemma \ref{lem:charfun}, we can decompose
\begin{align*}
	g_n(t) &= (P_n T)(t) = \sum_{i} T (N_i^{(n)}) N_i^{(n)*}(t) \\
	&= \sum_{i}  T(N_i^{(n)} \charfun_V) N_i^{(n)*}(t) +
	\sum_{i}  T(N_i^{(n)} \charfun_{V^c}) N_i^{(n)*}(t) \\
	&=: \Sigma_1^{(n)}(t) + \Sigma_2^{(n)}(t).
\end{align*}

\textsc{Part 1.a: $\Sigma_1^{(n)}(t)$ for $t\in V^c$:}
We will show that $\Sigma_1^{(n)}(t)$ converges to zero a.e. on $V^c$.
This is done by defining the measure
\begin{equation*}
	\theta(E) := \mu\big(E\cap V\big),\qquad E\in \mathcal B,
\end{equation*}
and 
\begin{equation*}
	F_r = \{t\in V^c : \limsup_n
		\|\Sigma_1^{(n)}(t)\|_X >1/r\} \subset V^c.
\end{equation*}
Observe that $\theta(F_r)=0$ and, by \eqref{eq:uniformestimate} and Theorem \ref{thm:boundgram}, 
\begin{equation*}
	\|\Sigma_1^{(n)}(t)\|_X \lesssim \sum_{i,j} \frac{q^{|i-j|}}{h_{ij}^{(n)}} 
	\theta(\supp N_i^{(n)}) N_j^{(n)}(t), \qquad t\in F_r,
\end{equation*}
which allows us to apply Lemma \ref{31-10-17-3} on $F_r$ and $\theta$ to get $\lambda(F_r)=0$ for all
$r>0$, i.e., $\Sigma_1^{(n)}(t)$ converges to zero a.e. on $V^c$.

\textsc{Part 1.b: $\Sigma_2^{(n)}(t)$ for $t\in V^c$:} Let  $\mathcal B_{V^c} = V^c \cap \mathcal B . $
Thus   $\mathcal B_{V^c}$ is the restriction of the Borel $\sigma$-algebra $\mathcal B $ to  $V^c . $  
In this case, we 
define the vector measure $\nu$ of bounded variation on $(V^c,\mathcal B_{V^c})$ by
\begin{equation*}
	\nu(A) := T(\charfun_A),\qquad A\in \mathcal B_{V^c}.
\end{equation*}
Here we use the second part of Lemma \ref{lem:charfun} to guarantee that the right 
hand side is defined and \eqref{eq:uniformestimate} ensures $|\nu|\leq \mu$.
Apply Lebesgue decomposition Theorem~\ref{thm:lebesgue} to get 
\begin{equation}\label{31-10-17-9}\dif\nu = g\dif\lambda + \dif\nu_s \end{equation}
where $g\in L^1_X$ and $|\nu_s|$ is singular to $\lambda$.
Observe that for all $f\in \cup S_n$, we have
\begin{equation}\label{eq:ext}
	\int f\dif\nu = T(f \charfun_{V^c}).
\end{equation}
Indeed, 
this holds for indicator functions by definition and each $f\in\cup S_n$
can be approximated in $L^1(\mu)$  by linear combinations of indicator 
functions. Therefore, \eqref{eq:ext} is established, since both sides of
\eqref{eq:ext} are continuous in $L^1(\mu)$.
So, 
\begin{align*}
	\Sigma_2^{(n)}(t) &= \sum_{i} \int N_i^{(n)}\dif\nu
	\cdot N_i^{(n)*}(t) \\
	&= \sum_{i} \int N_i^{(n)} g\dif \lambda \cdot
	N_i^{(n)*}(t) + \sum_{i} \int N_i^{(n)}\dif\nu_s \cdot
	N_i^{(n)*}(t).
\end{align*}
The first part is $P_n g$ for an $L^1_X$ function $g$ and this converges by
Theorem \ref{thm:aeconv} a.e. to $ g$.

To treat the second part $P_n\nu_s$, let $A\in \mathcal B_{V^c}$ be a subset of $V^c$ with the property
$\lambda(V^c\setminus A)=|\nu_s|(A)=0$, which is possible since $|\nu_s|$ is
singular to $\lambda$. For $x^*\in X^*$, we define the set 
\begin{equation*}
F_{r,x^*} := 
\{ t\in A: \limsup_n |(x^*P_n\nu_s)(t)| >1/r\}.
\end{equation*}
Since by  Theorem \ref{thm:boundgram}
\begin{align*}
|x^* P_n\nu_s(t)| &=\Big|\sum_{i,j} a_{ij}^{(n)}\int N_i^{(n)}\dif(x^*\circ\nu_s)\cdot N_j^{(n)}(t)\Big| \\
&\lesssim \sum_{i,j} \frac{q^{|i-j|}}{h_{ij}^{(n)}} |x^*\circ \nu_s|(\supp
N_i^{(n)}) \cdot N_j^{(n)}(t),
\end{align*}
we can apply Lemma \ref{31-10-17-3} to $F_{r,x^*}$ and the measure
$\theta(B)=|x^*\circ\nu_s|(B\cap V^c)$
 to  obtain $\lambda(F_{r,x^*})=0$.
Since the closure $X_0$ in $X$ of the set $\{P_n\nu_s(t) : t\in [0,1], n\in\mathbb
N\}$ is a separable subspace of $X$, by Lemma \ref{lem:dual}, there exists a sequence $(x^*_n)$ of
elements in $X^*$ such that for all $x\in X_0$ we have $\|x\| = \sup_n
|x^*_n(x)|$.
This means that we can write
\begin{equation*}
	F := \{ t\in A: \limsup_n \|P_n\nu_s(t)\| >0\} =\bigcup_{n,\ell=1}^\infty F_{\ell, x_n^*},
\end{equation*}
and thus, $\lambda(F)=0$, which shows that $P_n\nu_s$ tends to zero almost
everywhere on $V^c$ with respect to Lebesgue measure.

\textsc{Part 2: $t\in V$:}

Now, we consider $t\in V$
or more precisely $t\in U$. This makes no difference for considering a.e. convergence since 
the difference between $V$ and $U$ is a Lebesgue zero set.
We choose the index $j_0$ such that $t\in U_{j_0}$ and based on
the location of $t$, we decompose (using Lemma \ref{lem:charfun})
\begin{align*}
	g_n(t) &= P_n T(t) = \sum_{i} T (N_i^{(n)}) N_i^{(n)*}(t)
	\\
	&= \sum_{i} T (N_i^{(n)} \charfun_{V_{j_0}})\cdot N_i^{(n)*}(t)
	+  \sum_{i} T (N_i^{(n)} \charfun_{V_{j_0}^c})\cdot
	N_i^{(n)*}(t) \\
	&=: \Sigma_1^{(n)}(t) + \Sigma_2^{(n)}(t).
\end{align*}

\textsc{Part 2.a: $\Sigma_1^{(n)}(t)$ for $t\in U_{j_0}$:}

We now consider
\begin{equation*}
	\Sigma_1^{(n)}= \sum_{i}  T(N_i^{(n)} \charfun_{V_{j_0}})
	N_i^{(n)*}(t),\qquad t\in U_{j_0},
\end{equation*}
and perform the construction of the B-splines $(\bar N_j)$ and their dual
functions $(\bar N_j^*)$ corresponding to
$V_{j_0}$ described in Section \ref{sec:splineConstruction}.
Define the function
\begin{equation}
	\label{eq:defg}
	u(t) := \sum_j T(\bar N_j) \bar N_j^*(t),\qquad t\in U_{j_0},
\end{equation}
and first note that $\bar N_j\in W$ since by Lemma \ref{lem:convBspline}
it is the uniform limit of the functions $(N_j^{(n)}\charfun_{V_{j_0}})$, which,
in turn, are contained in $W$ by Lemma \ref{lem:charfun}. Therefore, $T(\bar
N_j)$ is defined. Moreover, the series in \eqref{eq:defg} converges pointwise
for $t\in U_{j_0}$, since $\lambda(\bar I(t))>0$, the sequence $j\mapsto \bar
N_j^*(t)$ admits a geometric decay estimate by \eqref{eq:barNDual} and the
inequality $\|T(\bar N_i)\|_X \leq \mu(\supp \bar N_i)$. If one additionally
notices that \eqref{eq:barNDual} implies the estimate $\|\bar
N_j^*\|_{L^1}\lesssim 1$ we see that the convergence in \eqref{eq:defg} takes
place in $L^1_X$ as well.
This implies $\langle u,\bar N_i\rangle = T(\bar N_i)$ for all $i$ by Lemma
\ref{lem:splineDual}.

Next, we show that if for all $n$, $(a_i)$ and $(a_i^{(n)})$ are sequences in $X$ so that 
for all $i$ we have $\lim_{n} a_i^{(n)} = a_i$,  and $\sup_i \|a_i\|_X +
\sup_{i,n}
\|a_i^{(n)}\|_X\lesssim 1$ it follows that 
\begin{equation}\label{eq:convDualSeries}
	\lim_n \sum_i (a_i^{(n)}-a_i) N_i^{(n)*}(t) = 0, \qquad t\in U_{j_0}.
\end{equation}
Indeed, let $\varepsilon>0$, the integer $L$ such that $q^L \leq
\varepsilon\cdot\inf_n
\lambda(I_n(t)) $ and $M$ sufficiently large that for all $n\geq
M$ and all $i$ with $|i-i_n(t)|\leq L$, we have $\|a_i^{(n)} - a_i \|_X \leq
\varepsilon\cdot\inf_n\lambda(I_n(t))$. Then, by Theorem \ref{thm:boundgram},
\begin{align*}
	\Big\| \sum_i (a_i^{(n)} - a_i) N_i^{(n)*}(t) \Big\|_X 
	&\leq \sum_i \| a_i^{(n)} - a_i \|_X \frac{q^{|i-i_n(t)|}}{\lambda(I_n(t))} \\
	&=\Big( \sum_{i: |i-i_n(t)|\leq L} + \sum_{i : |i-i_n(t)|>L}\Big)
	\| a_i^{(n)} - a_i \|_X \frac{q^{|i-i_n(t)|}}{\lambda(I_n(t))} \\
	&\lesssim \sum_{i:|i-i_n(t)|\leq L} \varepsilon q^{|i-i_n(t)|} +
	\sum_{i:|i-i_n(t)|>L} \frac{q^{|i-i_n(t)|}}{\lambda(I_n(t))} 
	\lesssim \varepsilon
\end{align*}

We now use these remarks to show that 
\begin{equation*}
	\lim_n \| \Sigma_1^{(n)}(t) - P_n u(t) \|_X = 0,\qquad t\in U_{j_0}.
\end{equation*}
Indeed, since $\langle u,\bar N_i\rangle = T(\bar N_i)$ for all $i$,
\begin{align*}
	\Sigma_1^{(n)}(t) - P_nu(t) &= \sum_i \big(T(N_i^{(n)}\charfun_{V_{j_0}})
	- \langle u,N_i^{(n)}\rangle\big) N_i^{(n)*}(t) \\
	&=\sum_i \big(T(N_i^{(n)}\charfun_{V_{j_0}})
	- T(\bar N_i)\big) N_i^{(n)*}(t)\\
	&\qquad + \sum_i \big(\langle u, \bar N_i\rangle
	- \langle u,N_i^{(n)}\rangle\big) N_i^{(n)*}(t).
\end{align*}
Now, observe that for all $i$, we have $T(N_i^{(n)}\charfun_{V_{j_0}})\to T(\bar
N_i)$ and $\langle u,N_i^{(n)}\rangle \to \langle u,\bar N_i\rangle$ since by
Lemma \ref{lem:convBspline}, $N_i^{(n)}$ converges uniformly to $\bar N_i$ on
$V_{j_0}$ and $u\in L^1$. Moreover all the expressions
$T(N_i^{(n)}\charfun_{V_{j_0}}), T(\bar N_i), \langle u,N_i^{(n)}\rangle$ are
bounded in $i$ and $n$. As a consequence, we can apply \eqref{eq:convDualSeries}
to both of the sums in the above display
to conclude 
\begin{equation*}
	\lim_n \| \Sigma_1^{(n)}(t) - P_n u(t) \|_X = 0,\qquad t\in U_{j_0}.
\end{equation*}
But we know that $P_n u(t)$ converges a.e. to $ u(t)$ by Theorem \ref{thm:aeconv}, this
means that also $\Sigma_1^{(n)}(t)$ converges to $u$  a.e.

\textsc{Part 2.b: $\Sigma_2^{(n)}(t)$ for $t\in U_{j_0}$:}
We show that $\Sigma_2^{(n)}(t)=\sum_{i} T (N_i^{(n)} \charfun_{V_{j_0}^c})\cdot
	N_i^{(n)*}(t)$ converges to zero for $t\in U_{j_0}$.
Let $\varepsilon>0$ 
and set $s= \inf_n \lambda(I_n(t))$, where we recall that $I_n(t)$ is the
grid interval in $\Delta_n$ that contains the point $t$. Since 
$s>0$ we can choose an open interval $O$ with the property $\mu(O\setminus
V_{j_0})\leq \varepsilon s$. Then, due to the fact that $t\in U_{j_0}$, we can
choose $M$ sufficiently large that both
intervals $(\inf O, t)$ and $(t,\sup O)$ contain 
$L$ points of the grid $\Delta_M$ where $L$ is such that $q^L
\leq \varepsilon s /\mu([0,1])$. Thus, we estimate for $n\geq M$ by
\eqref{eq:uniformestimate} and Theorem
\ref{thm:boundgram}
\begin{align*}
	\| \Sigma_2^{(n)}(t) \|_X & \leq \sum_{i} \mu(\supp N_i^{(n)}\cap
	V_{j_0}^c ) \frac{q^{|i-i_n(t)|}}{\lambda(I_n(t))} \\
	&\leq \frac{1}{s}\cdot \Big( \sum_{i: \supp N_i^{(n)}\cap O^c\neq \emptyset}
	+ \sum_{i:\supp N_i^{(n)}\subset O}\Big)\big( \mu(\supp N_i^{(n)}\cap
	V_{j_0}^c) q^{|i-i_n(t)|} \big) \\
	&\lesssim \frac{1}{s}\big( \mu([0,1]) q^L + \mu(O\setminus V_{j_0})
	\big) \lesssim \varepsilon.
\end{align*}
This proves that $\Sigma_2^{(n)}(t)$ converges to zero for $t\in U_{j_0}$.
\end{proof}

By looking at the above proof and employing the notation therein, we have
actually proved the following, explicit form of the Spline Convergence
Theorem:
\begin{thm}\label{3-11-17-1}
	Let $X$ be a Banach space with RNP and 
	$(g_n)$ be sequence in $L^1_X$ with the properties
	\begin{enumerate}
		\item $\sup_n \|g_n\|_{L^1_X} <\infty$, 
		\item $P_m g_n = g_m$ for all $m\leq n$.
	\end{enumerate}
Then, $g_n$ converges a.e. to the $L^1_X$-function
\begin{equation*}
 g\charfun_{V^c} + \sum_{j_0} \sum_j T(\bar N_{j_0,j}) \bar
	N_{j_0,j}^*\charfun_{U_{j_0}}.
\end{equation*}
Here, $g$  is defined by \eqref{31-10-17-9},
and  for each $j_0$, $(\bar N_{j_0,j})$ and $(\bar N_{j_0,j}^{*})$ are 
the B-splines and their dual functions 
constructed in Section
\ref{sec:splineConstruction} corresponding to $V_{j_0}$.
\end{thm}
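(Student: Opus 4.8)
The plan is to observe that all of the analytic work is already carried out in the proof of Theorem~\ref{thm:main}; what remains is to read off the explicit pointwise limits produced there and to confirm that the resulting function lies in $L^1_X$. First I would invoke Theorem~\ref{thm:main} to know that $g_n$ converges $\lambda$-a.e.\ to some function $g_\infty$. Since the norm is continuous, $\|g_\infty(t)\|_X = \lim_n \|g_n(t)\|_X$ a.e., so Fatou's lemma together with hypothesis~(1) yields $\|g_\infty\|_{L^1_X}\le \liminf_n\|g_n\|_{L^1_X} \le \sup_n\|g_n\|_{L^1_X}<\infty$; hence $g_\infty\in L^1_X$. It then suffices to identify $g_\infty$ separately on $V^c$ and on each interval $U_{j_0}$, since by construction $[0,1] = V^c \sqcup \bigl(\bigcup_{j_0} U_{j_0}\bigr)\sqcup \bigl(\bigcup_j B_j\bigr)$ and the last set is countable, hence $\lambda$-null.

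On $V^c$ I would revisit Part~1 of the proof of Theorem~\ref{thm:main}. There the decomposition $g_n = \Sigma_1^{(n)}+\Sigma_2^{(n)}$ was shown to satisfy $\Sigma_1^{(n)}(t)\to 0$ a.e.\ (Part~1.a, via Lemma~\ref{31-10-17-3}), while $\Sigma_2^{(n)} = P_n g + P_n\nu_s$ with $P_n g\to g$ a.e.\ by Theorem~\ref{thm:aeconv} and $P_n\nu_s\to 0$ a.e.\ (Part~1.b). Hence $g_\infty = g$ a.e.\ on $V^c$, where $g$ is the $\lambda$-absolutely continuous density from the Lebesgue decomposition \eqref{31-10-17-9}. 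This produces the summand $g\charfun_{V^c}$.

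On a fixed $U_{j_0}$ I would revisit Part~2, relative to $V_{j_0}$. There the decomposition $g_n = \Sigma_1^{(n)}+\Sigma_2^{(n)}$ was shown to satisfy $\Sigma_2^{(n)}(t)\to 0$ (Part~2.b) and $\Sigma_1^{(n)}(t)\to u(t)$ a.e.\ (Part~2.a), where $u$ is the function defined in \eqref{eq:defg}, namely $u=\sum_j T(\bar N_{j_0,j})\bar N_{j_0,j}^*$ with the B-splines $(\bar N_{j_0,j})$ and duals $(\bar N_{j_0,j}^*)$ attached to $V_{j_0}$. Thus $g_\infty = u$ a.e.\ on $U_{j_0}$, contributing $\sum_j T(\bar N_{j_0,j})\bar N_{j_0,j}^*\charfun_{U_{j_0}}$. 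Summing over $j_0$ and adding the $V^c$ contribution gives the stated formula.

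The one point not already spelled out in the earlier proof is the global $L^1_X$-integrability of the assembled series: although each block $\sum_j T(\bar N_{j_0,j})\bar N_{j_0,j}^*$ lies in $L^1_X(U_{j_0})$ with controlled norm (using $\|\bar N_{j_0,j}^*\|_{L^1}\lesssim 1$ from \eqref{eq:barNDual} and $\|T(\bar N_{j_0,j})\|_X\le \mu(\supp\bar N_{j_0,j})$ from \eqref{eq:uniformestimate}), summability over all $j_0$ is not obvious term by term, and this is where I expect the only genuine care to be needed. The clean resolution is that the membership $g_\infty\in L^1_X$ is already supplied by the Fatou argument of the first step, so no independent estimate of the sum over $j_0$ is required; the displayed expression is simply the a.e.\ identification of that limit.
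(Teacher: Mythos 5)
Your proposal is correct and follows essentially the same route as the paper, which proves Theorem~\ref{3-11-17-1} simply by reading off the a.e.\ limits already identified in Parts~1 and~2 of the proof of Theorem~\ref{thm:main} ($g$ on $V^c$ via the Lebesgue decomposition \eqref{31-10-17-9}, and $u=\sum_j T(\bar N_{j_0,j})\bar N_{j_0,j}^*$ on each $U_{j_0}$ via \eqref{eq:defg}). Your additional Fatou-lemma argument for the $L^1_X$-membership of the assembled limit is a sound and welcome supplement to what the paper leaves implicit.
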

\begin{rem} In order to emphasize the pivotal role of the set $V$ and its complement
we note that the proof  of Theorem~\ref{3-11-17-1} implies the following: 
If $(g_n)$ be sequence in $L^1_X$ such that 
	\begin{enumerate}
		\item $\sup_n \|g_n\|_{L^1_X} <\infty$, 
		\item $P_m g_n = g_m$ for all $m\leq n$
	\end{enumerate}
and if $\lambda(V^c) = 0$ then,  {\em without any} condition on the  Banach space $X$, 
 $g_n$ converges a.e. to  $$\sum_{j_0} \sum_j T(\bar N_{j_0,j}) \bar
	N_{j_0,j}^*\charfun_{U_{j_0}}.$$
\end{rem}

\begin{rem}
Based on the results of the present paper, 
an intrinsic spline characterization of the Radon-Nikod\'{y}m property 
in terms of splines was obtained by the second named author in \cite{Passenbrunner2019}.
The result in \cite{Passenbrunner2019} establishes the full analogy between spline and
 martingale convergence.
\end{rem}

\subsection*{Acknowledgments}
We are grateful to R. Lechner (Linz) for suggesting  that spline convergence  be 
studied in the context of Banach spaces with the RNP.
The research of M. P. was supported by the FWF-projects {Nr.}P27723 and {Nr.}P32342.
The research of P. F. X. M. was supported by  FWF-project {Nr.}P28352.
\bibliographystyle{plain}
\bibliography{convergence}
\end{document}